\def\p{\mathbb{P}}
\def\sm{\setminus}
\def\namedlabel#1#2{\begingroup
    #2%
    \def\@currentlabel{#2}%
    \phantomsection\label{#1}\endgroup
}
\theoremstyle{plain}
\newtheorem{theorem}{Theorem}[section]
\newtheorem{lemma}[theorem]{Lemma}
\newtheorem{proposition}[theorem]{Proposition}
\newtheorem{corollary}[theorem]{Corollary}
\theoremstyle{remark}
\newtheorem{remark}[theorem]{Remark}
\newtheorem{definition}[theorem]{Definition}
\newtheorem{example}{Example}[section]
\numberwithin{equation}{section}
\newcommand{\XX}{\mathcal{X}}
\newcommand{\U}{\mathcal{U}}
\newcommand{\eps}{\varepsilon}
\newcommand{\dist}{d}
\title{On distributional limit laws for recurrence}
\author[M.P.~Holland]{Mark Holland}
\address{Mathematics (CEMPS)\\
Harrison Building (327)\\
North Park Road\\
Exeter, EX4 4QF\\
England
} 
\email{m.p.holland@exeter.ac.uk.}
\urladdr{https://empslocal.ex.ac.uk/people/staff/mph204/}    
\author[M.~Todd]{Mike Todd}
\address{Mathematical Institute\\
University of St Andrews\\
North Haugh\\
St Andrews\\
KY16 9SS\\
Scotland} 
\email{m.todd@st-andrews.ac.uk}
\urladdr{https://mtoddm.github.io}    
\begin{document}

\date{\today}

\begin{abstract}
For a probability measure preserving dynamical system $(\XX,f,\mu)$, the Poincar\'e Recurrence
Theorem asserts that $\mu$-almost every orbit is recurrent with respect to its initial condition.
This motivates study of the statistics of the process $X_n(x)=\dist(f^n(x),x))$, and real-valued functions thereof. For a wide class of non-uniformly expanding dynamical systems, we show that 
the time-$n$ counting process $R_n(x)$ associated to the number recurrences below a certain 
radii sequence $r_n(\tau)$ follows an \emph{averaged} Poisson distribution $G(\tau)$. Furthermore, we obtain quantitative results on almost sure rates for the recurrence statistics of the process $X_n$.
\end{abstract}

\thanks{ MT was partially supported by the FCT (Fundação para a Ciência e a Tecnologia) project 2022.07167.PTDC.  MH acknowledges University of St Andrews for hospitality. Both authors gratefully acknowledge LMS Scheme 4 funding. The authors thank T. Persson for useful comments.}

\keywords{Recurrence, Return time statistics, Poisson limits, almost sure limits}

\subjclass[2020]{37A50, 37B20, 60G55, 37E05, 60G70}

\maketitle

\section{Introduction}
We consider a measure preserving dynamical system $(\XX,f,\mu)$, where $(\XX, d)$ is a metric space, and $f:\XX\to\XX$ is
a discrete time map preserving the (Borel) measure $\mu$. When $\mu(\XX)=1$, we know from the Poincar\'e Recurrence Theorem that for $\mu$-almost every $x\in\XX$, there exists a subsequence $(n_k)$ such
that $\dist(f^{n_k}(x),x)\to 0$. A fundamental problem in quantitative recurrence theory is to establish
limit laws governing the statistics of the process $X_{n}(x)=\dist(f^n(x),x)$, such as those
governing the frequency of close returns of $f^n(x)$ to $x$. We consider distributional limit
laws associated to the number of visits to sets of the form $\{X_n(x)<r_n\}$, with $r_n\to 0$.
  
Given a sequence $(r_n)$ we consider the sequence of balls $B_n(x)$ with $B_n(x)=B(x,r_n)$. To fix notation,
let
\begin{equation*}\label{eq:counting_rec}
R_n(r,x):=\#\{j\in[1,n]:\,\dist(x,f^j(x))\leq r\}.
\end{equation*}
Equivalently,
$$R_n(r_n,x)=\sum_{j=1}^{n}\mathbf{1}_{B_n(x)}(f^jx),$$
where $\mathbf{1}_A(x)$ denotes the indicator function for the set $A$. Thus $R_n(r_n,x)$ is
a counting function of close recurrence relative to the sequence $(r_n)$.
Related to recurrence is that of \emph{hitting}.
Let $A\subset X$ be a measurable set, and consider the counting process defined by
\begin{equation*}\label{eq:counting_rv}
N_n(A,x):=\#\{j\in[1,n]:\,f^j(x)\in A\}=\sum_{j=1}^{n}\mathbf{1}_{A}(f^jx).
\end{equation*}
In the case $A$ does not depend on $x$ (or it's orbit), then $N_n(A,x)$ is a hitting-time
counting function relative to $A$. For example, $A$ could be a ball $B(\zeta,r)$ centred
at some $\zeta\in\mathcal{X}$, and $r$ could depend on $n$ with $r_n\to 0$.

In this paper our first main result is Theorem~\ref{thm:poisson_like} where we establish distributional limit laws for the process $R_n(r_n,x)$ and contrast it
with those limit laws associated to the process $N_n(A,x)$ with $A=B(\zeta,r_n)$, and $\zeta$ a generic
point in $\mathcal{X}$.
To obtain distributional limit laws, we seek a non-degenerate function $G(\tau)$, and a sequence
$r_n(\tau)$ such that
\begin{equation*}\label{eq:counting_dist}
R_n(r_n(\tau),x)\to G(\tau),\quad(n\to\infty),
\end{equation*}
with convergence in distribution. Here $\tau>0$ is a real number. Since $R_n$ is a counting random variable, we are also interested in the functional forms $G(\tau)\equiv G(\tau,k)$ in the cases 
$\lim_{n\to\infty}\mu\{x:R_n(r_n(\tau),x)=k\}$, for $k\in\mathbb{N}$. 

\medskip

To guide us on the functional form for $G$, consider an i.i.d processes $\hat{X}_n$ with 
distribution function $F_{\hat{X}}(u):=P(\hat{X}\leq u)$. Suppose that for given $\tau>0$ we choose $u_n(\tau)$
so that $n(1-F_{\hat{X}}(u_n))\to\tau$ as $n\to\infty$. Then
\begin{equation*}\label{eq:standard_poisson}
\p(\#\{j\leq n:\,\hat{X}_j>u_n\}=k)\to\frac{\tau^k e^{-\tau}}{k!},
\end{equation*}
which coincides with the Poisson distribution. For certain dynamical processes of the form
$Y_n=\phi(f^{n}(x))$, where $\phi:\XX\to\mathbb{R}$ is a real-valued observable, then corresponding
Poisson laws are established, see \cite{FFT1}. 
The form of the observable is usually important in obtaining the standard Poisson limit law, 
and usually we require $\phi(x)$ is \emph{distance-like} so that $\phi(x)=\psi(\dist(x,\zeta))$
for some monotone decreasing function $\psi:[0,\infty)\to[0,\infty]$. More importantly, the reference
point $\zeta$ is usually chosen independently of the process $(Y_n)_{n\geq 0}$. In our case,
as relevant for recurrence analysis, the point $\zeta$ is actually the initial orbit value. This significantly influences the limit laws that arise relative to the case where $\zeta$ is chosen
independent of the orbit.

Our second main result is Theorem~\ref{thm:almost-sure}. This result gives almost sure bounds
(both lower and upper) on $\min_{j\leq n}d(f^jx,x)$. Such results complement the distributional limit laws obtained for $R_n$, and are based on a more refined analysis of the event $\{R_n(r_n,x)=0\}$.
These results produce a further understanding of quantitative Poincar\'e recurrence statistics compared to works of Boshernitzan \cite{Boshernitzan}, and more recent works such as \cite{KKP2,Persson}.

This paper is organised as follows. In Section \ref{sec.main-results} we introduce the main assumptions
on the system $(\XX,f,\mu)$. These assumptions are based upon the mixing properties (such as having exponential decay of correlations), regularity of the invariant density (e.g., of bounded variation type), and quantitative estimates on the recurrence time statistics over short time scales. We then state the main results, namely Theorem~\ref{thm:poisson_like} concerning
a distributional limit law for $R_n$, and then Theorem~\ref{thm:almost-sure} regarding an almost sure limit law for $R_n$. In Section \ref{sec.asym-hyp-rec} we show through specific case studies that the conclusions of the results obtained in Section \ref{sec.main-results} can also apply to systems with weaker assumptions on their mixing rates, and/or assumptions on the invariant density. This captures systems with polynomial decay of correlations, and/or systems with unbounded invariant density. The main result here
is Theorem~\ref{thm.asym-hyp-rec}. 
In Sections~\ref{sec.pf.thm:poisson_like}, \ref{sec.pf.thm:almost-sure} and \ref{pf:thm.asym-hyp-rec} we prove Theorems~\ref{thm:poisson_like}, \ref{thm:almost-sure} and \ref{thm.asym-hyp-rec} respectively.  Finally in the appendix we give some extensions of Theorem~\ref{thm:almost-sure}.

\section{Uniformly expanding interval maps}\label{sec.main-results}

We consider a measure preserving system $(\XX,f,\mu)$, with $\mu$ an ergodic
measure. The systems we consider include those that can be
modelled by a Young tower \cite{Young}. However we do not need the precise Young tower
construction in our analysis. We focus on interval maps rather than general hyperbolic systems.
Applications include piecewise expanding interval maps \cite{LasotaYorke}, quadratic maps at Benedicks-Carleson parameters \cite{Young_quadratic}, and maps with neutral fixed points (intermittent maps) \cite{FFTV}
 
We introduce the following definitions and assumptions. We begin with the definition of decay of correlations. 
\begin{definition}\label{def.dc}
  We say that $(\XX,f,\mu)$ has decay of correlations in (Banach
  spaces) $\mathcal{B}_1$ versus $\mathcal{B}_2$, with rate
  function $\Theta(j)\to 0$ if for all
  $\varphi_1\in\mathcal{B}_1$ and $\varphi_2\in\mathcal{B}_2$ we
  have
  \[
  \mathcal{C}_j(\varphi_1,\varphi_2,\mu):= \biggl|\int \varphi_1
  \cdot \varphi_2\circ f^j \, \mathrm{d} \mu -\int \varphi_1 \,
  \mathrm{d} \mu \int \varphi_2 \, \mathrm{d} \mu \biggr| \leq
  \Theta(j) \|\varphi_1\|_{\mathcal{B}_1}
  \|\varphi_2\|_{\mathcal{B}_2},
  \]
  where $\|\cdot\|_{\mathcal{B}_i}$ denote the corresponding
  norms on the Banach spaces.
\end{definition}

In particular, we consider the $L^p$ and $\mathrm{BV}$ norms of
functions $\varphi \colon \XX \subset \mathbb{R} \to
\mathbb{R}$ defined respectively by
\begin{align*}
  \|\varphi \|_p & := \left(\int |\varphi|^{p} \, \mathrm{d}\mu\right)^{\frac{1}{p}},
  \\ \|\varphi\|_{\mathrm{BV}} & := \mathrm{var}(\varphi) +
  \sup(|\varphi|),
\end{align*}
where $\mathrm{var}(\varphi)$ denotes the total variation of
$\varphi$. In particular, for an interval $J\subset\XX$,
$$\mathrm{var}_J(\varphi):=\sup_{\mathcal{P}}\sum_{i=1}^{m}|\varphi(x_i)-\varphi(x_{i-1})|,$$
where $\mathcal{P}$ is the partition of $J$ into intervals $[x_i,x_{i+1}]$, with $\{x_0,x_m\}=\partial J$.
The supremum is then taken over all such partitions. When $J=\mathcal{X}$, we just denote this \emph{variation}
by $\mathrm{var}(\varphi)$. 
Functions $\varphi \colon \XX \subset \mathbb{R} \to
\mathbb{R}$ with $\|\varphi\|_{\mathrm{BV}}<\infty$ are called
functions of \emph{bounded variation}.  
Throughout this paper we assume that our measure $\mu$ is ergodic and absolutely continuous with respect to Lebesgue: an \emph{acip}.

The first main assumption is the following.
\begin{enumerate}
\item[(A1)] We assume that
  $(\XX,f,\mu)$ has exponential decay of correlations in Banach
  spaces $\mathcal{B}_1=\mathrm{BV}$ versus
  $\mathcal{B}_2=L^{1}(\mu)$.
\end{enumerate}
We remark that within (A1), we could also have taken $\mathcal{B}_2=L^p(\mu)$ for some $p\in(1,\infty]$.
The proofs of the main results do not rely heavily on the value of $p$. Moreover the case $p=\infty$ can be 
replaced by assuming instead decay for some $p>1$, see \cite[Section 15]{HKKP}. For certain dynamical systems, decay of correlations may only be known for $\mathcal{B}_1$ the space of H\"older functions (rather
than $\mathrm{BV}$). As discussed in Section~\ref{sec.pf.thm:poisson_like}, our results also apply in this case by minor modification of the proofs. 

In addition to decay of correlations we make assumptions on the recurrence statistics of orbits over early time windows. We define the following sets:
\begin{align*}
E_{j}(r_n) &:=\{x\in\XX:\,\dist(x,f^jx)\leq r_n\}\\
\tilde{E}_g(r_n) &:=\bigcup_{j=1}^{g(n)}E_j(r_n),
\end{align*} 
where $(r_n)$ is sequence (that does not depend on $x$), and $g(n)$ a monotone increasing function with
$g(n)\to\infty$.
We make the following assumption. In the following $a_n\approx b_n$ means there is a uniform constant
$c>1$ such that $a_n/b_n\in[c^{-1},c]$ (as $n\to\infty$).
\begin{enumerate}
	\item[(A2)] Suppose that $r_n\approx n^{-a}$ for some $a>0$. There exists $\beta_0\in(0,1)$, 
	and $C>0$ such that for all $j\leq (\log n)^2$ we have
	\begin{equation*}	\label{eq:A1}
	\mu(E_{j}(r_n))\leq Cr_{n}^{\beta_0}.
	\end{equation*}
	with constant $C$ independent of $j$.
\end{enumerate}
Consequently, this implies that for rate function $g(n)=(\log n)^2$, there exists $\beta_1\in(0,1)$ 
such that \begin{equation*}
\mu(\tilde{E}_{g}(r_n))\leq Cr_{n}^{\beta_1}.
\end{equation*}
\begin{definition}\label{def.hyp-rec}
We say that a measure preserving dynamical system $(\XX,f,\mu)$ is \emph{hyperbolic recurrent} if
it satisfies Assumptions (A1) and (A2), and has an ergodic measure $\mu$ with
invariant density $\rho\in\mathrm{BV}$.
\end{definition}
Examples satisfying Definition \ref{def.hyp-rec} include the Gauss map 
$f(x)=\lfloor\frac{1}{x}\rfloor\mod 1$, beta-transformations $f(x)=\beta x\mod 1$,
$(\beta >1)$, Rychlik maps, and Gibbs Markov maps \cite{HNT1} (in all these cases $\mu$ is an acip). 
In particular, regarding verification of (A2) see \cite[Section 13]{HKKP}, and \cite{Letal}. 
Our first main result is the following.
\begin{theorem}\label{thm:poisson_like}
Suppose that $(\XX,f,\mu)$ is a measure preserving dynamical system which is hyperbolic
recurrent in the sense of Definition \ref{def.hyp-rec}. Consider the process
$$X_n=\dist(f^nx,x),\quad
R_n(r_n,x):=\#\{1\leq j\leq n:\,X_j\leq r_n\},$$
with $r_n=\tau/2n$ and $\tau>0$. Then for all $k\geq 0$,
\begin{equation}\label{eq:poisson_like}
\lim_{n\to\infty}\mu(x\in\XX:\,R_n(r_n,x)=k)=\int_{\mathcal{X}}\frac{\tau^k\rho(x)^{k+1}}{k!}
e^{-\rho(x)\tau}\,dx.
\end{equation}
\end{theorem}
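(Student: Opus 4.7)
The plan is to establish convergence in distribution by matching factorial moments. A short calculation gives that the $m$-th factorial moment of the claimed limit equals $\tau^m\int_{\XX}\rho^{m+1}\,dx$, and since $\rho$ is bounded these moments grow at most like $(\|\rho\|_\infty\tau)^m$, so the limit distribution is moment-determined. It therefore suffices to prove, for each $m\geq 1$,
\[
E_\mu\!\left[R_n(R_n-1)\cdots(R_n-m+1)\right]\;=\;m!\sum_{1\leq j_1<\cdots<j_m\leq n}\int_{\XX}\prod_{i=1}^{m}\mathbf{1}_{B(x,r_n)}(f^{j_i}x)\,d\mu(x)\;\longrightarrow\;\tau^m\int_{\XX}\rho^{m+1}\,dx.
\]

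I would split the tuples into \emph{spread} ones (all gaps $j_{i+1}-j_i$ and $j_1$ exceed $(\log n)^2$) and \emph{concentrated} ones (otherwise). For concentrated tuples the close constraint is handled by (A2): two conditions $|f^{j_i}x-x|<r_n$ and $|f^{j_{i+1}}x-x|<r_n$ with small gap $k=j_{i+1}-j_i$ force $f^{j_i}x\in E_k(2r_n)$, a set of $\mu$-measure at most $Cr_n^{\beta_0}$ by (A2); each of the remaining $m-1$ indicator factors contributes one factor of $r_n$ via (A1) combined with the partition argument used below. With at most $Cn^{m-1}(\log n)^2$ concentrated tuples of each type and each integral bounded by $O(r_n^{m-1+\beta_0})$, their combined contribution is $O(n^{-\beta_0}(\log n)^2)=o(1)$ since $r_n\asymp n^{-1}$ and $\beta_0>0$.

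For spread tuples I would decouple the moving ball $B(x,r_n)$ from $x$ via a Riemann-type partition. Choose a length scale $\ell_n$ with $\ell_n\ll r_n$ and $n\ell_n\to 0$ (e.g.\ $\ell_n=r_n^{2}$), partition $\XX$ into intervals $\{I_p\}$ of length $\ell_n$ with midpoints $x_p$, and use the sandwich
\[
\mathbf{1}_{B(x_p,r_n-\ell_n/2)}(y)\;\leq\;\mathbf{1}_{B(x,r_n)}(y)\;\leq\;\mathbf{1}_{B(x_p,r_n+\ell_n/2)}(y)\qquad(x\in I_p)
\]
to replace the moving balls by fixed balls $B_p^{\pm}$. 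The total symmetric-difference error summed over all tuples and cells is $O(n\ell_n)=o(1)$. Each spread-tuple integrand within $I_p$ then becomes $\mathbf{1}_{I_p}(x)\prod_{i=1}^{m}\mathbf{1}_{B_p}(f^{j_i}x)$, an $(m+1)$-point correlation. I would peel off factors one at a time using (A1): at each step the ``$\mathrm{BV}$-side'' is $\mathbf{1}_{I_p}$ or $\mathbf{1}_{B_p}$ with $\mathrm{BV}$-norm at most $3$, and the ``$L^1(\mu)$-side'' is bounded by $\mu(B_p)^{m-i}=O(r_n^{m-i})$. Each gap being at least $(\log n)^2$ and $\Theta$ decaying exponentially makes the accumulated error super-polynomially small per tuple, while the main term factorises as $\mu(I_p)\prod_{i=1}^{m}\mu(B_p)\approx\rho(x_p)^{m+1}\ell_n(\tau/n)^{m}$.

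Summing this main term over $p$ produces a Riemann sum converging to $(\tau/n)^{m}\int_{\XX}\rho^{m+1}\,dx$, and summing over the $\sim n^m/m!$ spread tuples (cancelling the $m!$ prefactor) yields $\tau^{m}\int_{\XX}\rho^{m+1}\,dx$, as required. The main obstacle is the careful book-keeping in both the iterated use of (A1) and the concentrated-tuple estimate: one must verify that $\mathrm{BV}$-norms of the partially correlated integrands remain uniformly bounded in $p$, that $L^1(\mu)$-norms shrink geometrically like $r_n^{m-i}$, and that the cumulative error summed over $O(n^m/\ell_n)$ terms still vanishes. Fixing the reference ball $B_p$ independently of $x$ within each cell $I_p$ is the key technical device, since it eliminates the diagonal-indicator dependence and reduces every correlation to the clean $\mathrm{BV}$-versus-$L^1(\mu)$ form required by (A1).
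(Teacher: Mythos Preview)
Your approach via factorial moments is genuinely different from the paper's. The paper fixes $k$ and establishes a total-variation Poisson approximation for each cell $\Delta$ by the Chen--Stein method (their Proposition~4.4), controlling the short-return term $\mathcal{E}_2$ via a good/bad cell covering argument driven by (A2) (their Lemma~4.6), and then averages over cells. Your moment method is more elementary in spirit and, when it works, yields all $k$ simultaneously; your ``freeze the ball'' partition and left-to-right peeling for spread tuples are essentially the same device the paper uses in Proposition~4.1 and Lemma~4.3 to decouple the initial condition.

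There is, however, a real gap in your concentrated-tuple estimate for $m\ge 3$. After you peel from the left up to the close pair $(j_i,j_{i+1})$ with gap $k\le (\log n)^2$, you face $\int \mathbf 1_{B_p\cap f^{-k}B_p}(y)\prod_{l>i+1}\mathbf 1_{B_p}(f^{j_l-j_i}y)\,d\mu$. You cannot continue peeling: $\|\mathbf 1_{B_p\cap f^{-k}B_p}\|_{\mathrm{BV}}$ can be of order the number of branches of $f^k$, hence $e^{c(\log n)^2}$, and if instead you apply (A1) across the small gap $k$ you only get $\Theta(k)=O(1)$ and lose a full factor of $r_n$. The crude bound $\int\le \mu(B_p\cap f^{-k}B_p)$ drops the remaining $m-i-1$ factors and, summed over $p$ and over $O(n^{m-1}(\log n)^2)$ tuples, yields $O(n^{m-2-\beta_0}(\log n)^2)$, which diverges for $m\ge 3$. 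Your sentence ``each of the remaining $m-1$ indicator factors contributes one factor of $r_n$'' is precisely the step that is not justified. The fix is the same good/bad cell decomposition the paper uses for its $\mathcal{E}_2$: declare $p$ ``bad'' if $\mu(B_p\cap \tilde E_g(r_n))\ge n^{-1-b}$, use the covering argument from (A2) to show $\#\{\text{bad }p\}\le C m\, n^{b-\beta_1}$, and on good cells use $\mu(B_p\cap f^{-k}B_p)\le n^{-1-b}$ together with the remaining peeled factors. Without this (or an equivalent device), the factorial-moment route does not close for $m\ge 3$.
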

We now make several remarks. 
\begin{remark} 
The limit law appearing here is not Poisson, but Poisson-like in the sense that the Poisson parameter 
$\tau$ is weighted against the density $\rho(x)$ and then averaged over $\XX$ w.r.t.\ $\mu$. In the case of hitting time statistics where instead the process is given by $\widetilde{X}_n(x)=
\dist(f^nx,\zeta)$, (thus defined relative to a reference point $\zeta\in \XX$), the 
parameter $\tau$ is governed by the density at $\zeta$. Moreover existence of a Poisson limit-law depends on the recurrence properties of $\zeta$, for example $\zeta$ periodic versus non-periodic. In our case, there is no fixed reference point $\zeta$, since our time series is based on recurrence (close returns) to the initial orbit value. 
\end{remark}
\begin{remark}\label{rmk.collet}
For the special case $k=0$,
Collet \cite{Collet} obtained a similar limit law for certain one-dimensional non-uniformly expanding dynamical systems, such as those modelled by Young towers with exponential tails. In fact Collet 
considered the process $X_n=-\log\dist(f^nx,x)$ with modified scaling sequence 
$r_n(u)=u+\log n$. The corresponding limit law arising is that given in equation
\eqref{eq:poisson_like} (in the case $k=0$), but replacing $\tau$ by $2e^{-u}$.  Note that \cite[Proposition 3.5]{PenSau10} gives the same result in the context of billiard maps.
In Section \ref{sec.psi} we comment further on the limit laws that arise when looking at functions of
$d(f^nx,x)$.
\end{remark}
\begin{remark}
More recently, a result similar to the limit law appearing in equation \ref{eq:poisson_like} was obtained in \cite[Theorem 5.3]{DolFayLiu22}. They make the assumption of $r$-fold exponential mixing for all $r$, see \cite[Definition 3.1]{DolFayLiu22}. Our approach has the advantage of obtaining error terms in the convergence to distribution. This allows us to obtain almost sure results on the quantitative recurrence
statistics. See Section \ref{sec.almost-sure} ahead. We also obtain corresponding limit laws for systems having weaker (sub-exponential) assumptions on the rate of mixing, see Section \ref{sec.asym-hyp-rec}.
\end{remark}
As an immediately corollary, we have the following standard Poisson-law for the doubling map.
\begin{corollary}\label{cor:poisson_Leb}
Consider the doubling map $f(x)=2x\mod 1$ (preserving Lebesgue measure). Then along the
sequence $r_n=\tau/2n$ we have
\begin{equation*}\label{eq:poisson_like2}
\lim_{n\to\infty}\mathrm{Leb}\left(x\in\XX:\,R_n(r_n,x)=k\right)=\frac{\tau^k}{k!}
e^{-\tau}.
\end{equation*}
\end{corollary}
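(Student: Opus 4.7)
The plan is to derive this corollary as an immediate consequence of Theorem~\ref{thm:poisson_like} by checking that the doubling map $f(x)=2x\bmod 1$, paired with Lebesgue measure, fits the framework of Definition~\ref{def.hyp-rec}. The invariant density is $\rho\equiv 1$, which trivially lies in $\mathrm{BV}$, and (A1) is the classical Lasota--Yorke exponential decay of correlations for piecewise expanding interval maps \cite{LasotaYorke}.

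The one substantive point to verify is (A2). Here I would compute $E_j(r_n)$ directly: on the circle $\mathbb{R}/\mathbb{Z}$ one has $\dist(f^j(x),x)=\dist((2^j-1)x\bmod 1,\,0)$, so $E_j(r_n)$ is the preimage of a $2r_n$-neighbourhood of $0$ under the $(2^j-1)$-to-$1$ covering $x\mapsto (2^j-1)x\bmod 1$. Hence $\mathrm{Leb}(E_j(r_n))=2r_n$ uniformly in $j$. Since $r_n\approx n^{-1}$, one has $r_n\le r_n^{\beta_0}$ for any $\beta_0\in(0,1)$ and all $n$ large, so the required bound holds, with the implied constant independent of $j\leq(\log n)^2$.

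With the hypotheses of Theorem~\ref{thm:poisson_like} in place, the conclusion follows by substituting $\rho(x)\equiv 1$ into \eqref{eq:poisson_like}: the averaged Poisson integral collapses to $\tfrac{\tau^k}{k!}e^{-\tau}\int_{0}^{1}\,dx=\tfrac{\tau^k}{k!}e^{-\tau}$. There is no genuine obstacle; the corollary is essentially the observation that for a system whose invariant density is constant, the density-weighted averaging on the right-hand side of \eqref{eq:poisson_like} is vacuous and the limit law reduces to the classical Poisson distribution.
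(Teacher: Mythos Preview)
Your proposal is correct and matches the paper's approach: the corollary is stated there as an ``immediate corollary'' of Theorem~\ref{thm:poisson_like}, with no separate proof given, so the only content is precisely what you supply---checking hyperbolic recurrence for the doubling map and then substituting $\rho\equiv 1$ into \eqref{eq:poisson_like}. You actually go slightly beyond the paper by computing $\mathrm{Leb}(E_j(r_n))=2r_n$ explicitly, whereas the paper simply cites \cite[Section 13]{HKKP} and \cite{Letal} for (A2) in the class of beta-transformations.
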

 
\subsection{On almost sure behaviour of recurrence statistics.}\label{sec.almost-sure}
In this section we consider almost sure behaviour of the recurrence statistics, that is the determination
of sequences $u_n,v_n$ such that
$$\mu\left(x\in\XX:\,u_n<\min_{k\leq n}\dist(f^kx,x)<v_n,\;\mathrm{eventually}\right)=1.$$
For the related hitting time process $Y_n(x)=\psi(\dist(f^nx,\zeta))$, with $\zeta$ independent
of the orbit of $x$, and $\psi$ a monotone decreasing function, see \cite{GNO,HKKP,HNT2} concerning
almost sure results. For recurrence, there is much recent interest on obtaining almost sure bounds on 
$\min_{k\leq n} X_n(x)$ and solving related questions on determining the probability of the event 
$\{x\in\XX:d(f^nx,x)\in[0,r_n]\,i.o.\}$ when $r_n\to 0$. (Here $\mathrm{i.o.}$ means
infinitely often). We refer the reader to Boshernitzan 
\cite{Boshernitzan}, and more recently \cite{KKP2,Persson}. The following is our second main result.
\begin{theorem}\label{thm:almost-sure}
Suppose that $(\XX,f,\mu)$ satisfies the hypotheses of Theorem \ref{thm:poisson_like}.
Then we have the following cases.
\begin{enumerate}
\item[(a)] (Almost sure upper bounds). Consider the following alternatives:
\begin{enumerate}
\item[(i)] The density $\rho$ is strictly positive, and
$(r_n)$ is a sequence satisfying $r_n>\frac{c\log\log n}{n}$, with
$2c(\inf_{x\in\XX}\rho(x))>1$.
\item[(ii)] The density $\rho(x)$ is uniformly H\"older continuous, and
$(r_n)$ is a sequence satisfying
$r_n>\frac{(\log n)^{\gamma}}{2n}$, with $\gamma>\gamma_0$, and
\begin{equation}\label{eq:asure-sum}
\sum_{k\geq 1}\int_{\XX}\rho(x)e^{-\epsilon k^{\gamma_0}\rho(x)}\,dx<\infty,
\end{equation}
valid for all $\epsilon>0$.
\end{enumerate}
Then in either case (i) or (ii) we have
$$\mu\left(x\in\XX:\,\min_{k\leq n}\dist(f^kx,x)<r_n\;\mathrm{eventually}\right)=1.$$
\item[(b)] (Almost sure lower bound). Suppose the sequence $(r_n)$ satisfies
$\sum_{n=1}^{\infty}\int_{\XX}\mu(B(x,r_n))\,d\mu<\infty.$
Then 
$$\mu\left(x\in\XX:\,\min_{k\leq n}\dist(f^kx,x)>r_n\;\mathrm{eventually}\right)=1.$$
\end{enumerate}
\end{theorem}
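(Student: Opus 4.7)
All three statements share a common structure: reduce the almost sure claim to a summable Borel--Cantelli estimate along a carefully chosen subsequence, then restore the full sequence by exploiting that $n\mapsto\min_{k\le n}d(f^k x,x)$ and $n\mapsto r_n$ are both non-increasing.

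\emph{Part (b).} I would take the geometric blocks $n_j=2^j$ and work with the comparison event $\widetilde B_j:=\{\min_{k\le n_{j+1}}d(f^k x,x)\le r_{n_j}\}$, which dominates $\{\min_{k\le n}d(f^k x,x)\le r_n\}$ for every $n\in[n_j,n_{j+1}]$. A union bound gives $\mu(\widetilde B_j)\le\sum_{k=1}^{n_{j+1}}\mu(E_k(r_{n_j}))$, and I split at $k=(\log n_{j+1})^2$: short times are controlled by (A2), giving $O((\log n_j)^2 r_{n_j}^{\beta_0})$, which is summable since $r_n\approx n^{-a}$. For long times I partition $\XX$ into short intervals $\{A_i\}$ and apply (A1) with $\varphi_1=\mathbf{1}_{A_i}\in\mathrm{BV}$, $\varphi_2=\mathbf{1}_{B(x_i,r_{n_j})}\in L^1$, to obtain
\[
\mu(E_k(r_{n_j}))\;=\;\int\mu(B(x,r_{n_j}))\,d\mu(x)\;+\;(\text{Riemann partition error})\;+\;O(N\,\Theta(k)).
\]
Summing the leading term over $k\le n_{j+1}$ yields $\lesssim n_{j+1}\int\mu(B(x,r_{n_j}))d\mu$, and a standard dyadic-blocks comparison (valid because $a_n:=\int\mu(B(x,r_n))d\mu$ is non-increasing in $n$) gives $\sum_j n_j a_{n_j}\lesssim\sum_n a_n<\infty$ by hypothesis; the decay-of-correlations error is negligible because $\Theta$ is exponential, provided $N$ is taken polynomial in $n_{j+1}$.

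\emph{Part (a).} I would revisit the proof of Theorem~\ref{thm:poisson_like} to extract a quantitative zero-occupation bound of the form
\[
\mu\bigl(R_n(r_n,x)=0\bigr)\;\le\;\int_\XX\rho(x)\,e^{-2nr_n\rho(x)}\,dx+(\text{error})
\]
that remains useful when $\tau_n:=2nr_n\to\infty$. In case (i), substituting $r_n=c\log\log n/n$ gives leading term bounded by $(\log n)^{-2c\min_x\rho(x)}$. I then choose $n_j=\lceil(1+\delta)^j\rceil$ with $\delta>0$ small enough that $2c\min_x\rho(x)/(1+\delta)>1$ (possible exactly because $2c\min_x\rho(x)>1$), apply the bound to the event $\{\min_{k\le n_j}d(f^k x,x)\ge r_{n_{j+1}}\}$, and sum the resulting series $\sum_j j^{-2c\min_x\rho(x)/(1+\delta)}<\infty$. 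Borel--Cantelli, together with the monotonicity chain $\min_{k\le n}d(f^k x,x)\le\min_{k\le n_j}d(f^k x,x)<r_{n_{j+1}}\le r_n$ for $n\in[n_j,n_{j+1}]$, yields the claim. Case (ii) is handled identically with $\tau_n=(\log n)^\gamma$ and the exponential subsequence $n_j=e^j$: since $\gamma>\gamma_0$, for large $j$ one has $e^{-(\log n_j)^\gamma\rho(x)}\le e^{-\epsilon j^{\gamma_0}\rho(x)}$, and the required summability is precisely hypothesis~\eqref{eq:asure-sum}.

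\emph{Main obstacle.} The essential technical step is establishing the quantitative zero-occupation bound when $\tau_n\to\infty$: Theorem~\ref{thm:poisson_like} only gives convergence in distribution at fixed $\tau$, so I must re-run its proof while tracking how both the Riemann-sum error (from freezing $\rho$ across the BV partition) and the correlation error scale with $n$, and verify they remain summable along the chosen subsequence. This is also where H\"older continuity of $\rho$ enters in case (a)(ii): it controls the oscillation of $\rho$ inside the exponential $e^{-\tau_n\rho(x)}$ on balls of radius $r_n$, allowing the oscillation to be absorbed into a small modification of $\epsilon$ in~\eqref{eq:asure-sum}. A secondary subtlety in case (a)(i) is that the geometric ratio $1+\delta$ of the subsequence is forced to be close to $1$, so the entire Borel--Cantelli margin comes from the slack in $2c\min_x\rho(x)>1$, which is why the hypothesis is stated in exactly this sharp form.
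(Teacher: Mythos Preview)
Your treatment of part~(a) is essentially the paper's proof: both re-run the machinery behind Theorem~\ref{thm:poisson_like} at $k=0$ with $\tau_n=2nr_n\to\infty$, extract a quantitative bound $\mu(R_n(r_n,x)=0)\le\int\rho\, e^{-\tau_n\rho}\,dx+O(n^{-\alpha})$, pass to a geometric subsequence $n_j\sim a^j$, and close with the first Borel--Cantelli lemma plus monotonicity. The paper isolates the Riemann-sum comparison as a separate lemma (Lemma~\ref{lem:bv-est}), and your diagnosis of where H\"older regularity enters in~(a)(ii)---to control $|G_r(x)-2r\rho(x)|$ inside the exponential---matches the paper exactly. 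One cosmetic difference: the paper applies the bound directly to $\{R_{n_j}(r_{n_j},x)=0\}$ and recovers the sharp constant $c$ by sending the ratio $a\to1$ at the end, rather than building a $1/(1+\delta)$ loss into the subsequence event as you do; both routes arrive at the same threshold $2c\min_x\rho(x)>1$.

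Part~(b) is where you diverge. The paper disposes of it in two lines: the elementary identity $\{\min_{j\le n}d(f^jx,x)<r_n\text{ i.o.}\}=\{d(f^nx,x)<r_n\text{ i.o.}\}$ (for monotone $r_n$), followed by a direct citation of \cite[Theorem~C]{KKP2}, which delivers the Borel--Cantelli conclusion from the summability hypothesis alone, without invoking either~(A1) or~(A2). Your route---dyadic blocks, splitting $\mu(E_k(r))$ at $k=(\log n)^2$, and using~(A2) together with decorrelation to approximate $\mu(E_k(r))\approx\int\mu(B(x,r))\,d\mu$---is in effect a self-contained re-derivation of that cited result in the present setting. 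It is correct in outline, but note that your short-time step appeals to~(A2), which as stated is formulated for $r_n\approx n^{-a}$; the paper's citation-based proof imposes no such restriction on $(r_n)$ beyond the summability hypothesis itself.
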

We make several remarks. For case (a), we can compare this result to \cite[Theorem E]{KKP2}, where they require $r_n>1/n^\gamma$ for $\gamma<1$. They also consider measures that are not necessarily acips,
and consider scenarios where $r_n$ depends on $x$. We show that case (b) follows from \cite[Theorem C]{KKP2}. To prove Theorem \ref{thm:almost-sure}, we use Theorem \ref{thm:poisson_like} together with an error term. This allows us to then use a First Borel Cantelli Lemma approach to establish case (a). The details are given in Section \ref{sec.pf.thm:almost-sure}. In particular proof of case (b), via \cite[Theorem C]{KKP2} is also based upon a First Borel Cantelli Lemma approach.

At first sight, equation \eqref{eq:asure-sum} looks mysterious. Analogous
criteria arise in \cite[Section 10]{HKKP}. In particular, case (a)(i) implies validity of \eqref{eq:asure-sum}. In (a)(ii), the assumption on the regularity of the density can be relaxed to piecewise H\"older, assuming a finite number of jump discontinuity points. This is discussed in the appendix. Furthermore, case (a)(ii) is most useful if the density $\rho(x)$ equals zero for some $x\in\XX$, as in
the following example.
\begin{example}\label{ex:zero}
Suppose $\XX=[0,1]$ and for $C>0$, $a>0$ we have $\rho(x)\in[C^{-1},C]x^{a}$. Thus
$\rho(x)\approx x^a$ as $x\to 0$. The integral in \eqref{eq:asure-sum} can be estimated
via
\begin{equation*}
\int_{0}^{1}\rho(x)e^{-\epsilon k^{\gamma}\rho(x)}\,dx \approx 
\int_{0}^{1} x^a e^{-\epsilon k^{\gamma}x^a}\,dx
\leq \frac{C_{a}}{(\epsilon k^{\gamma})^{1+\frac{1}{a}}}\int_{0}^{\infty} u^{\frac{1}{a}}e^{-u}\,du.
\end{equation*}
For the latter inequality, we made a change of variable, and $C_a$ is a constant. In particular the latter integral is also uniformly bounded. Hence, we see that \eqref{eq:asure-sum} applies provided
$\gamma>\frac{a}{a+1}$.
\end{example}
In particular, examples of dynamical systems exhibiting zeros in their invariant density are considered in \cite{CHMV}. However, these examples, `intermittent cusp maps' are non-uniformly expanding. 
We give details of these in Section \ref{sec.asym-hyp-rec}, showing how
 the conclusion of case (a)(ii) applies.  For case (a)(i), we can be explicit on the range of $c$ in certain examples, such as those in the following Corollary.
\begin{corollary}\label{cor:almost_sure_Leb}
Consider the doubling map $f(x)=2x\mod 1$ (preserving Lebesgue measure). Then case (a)(i)
of Theorem \ref{thm:almost-sure} applies along the sequence $r_n=c\log\log n/n$ with $c>1/2$.
\end{corollary}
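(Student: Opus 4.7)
The plan is to apply Theorem~\ref{thm:almost-sure}(a)(i) directly to $(\XX,f,\mathrm{Leb})$ with $\XX=[0,1]$, $f(x)=2x\bmod 1$. Two things need checking: that the doubling map is hyperbolic recurrent in the sense of Definition~\ref{def.hyp-rec}, and that the constant $c$ in the corollary matches the numerical condition $2c(\min_{x\in\XX}\rho(x))>1$ that appears in case (a)(i).

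For hyperbolic recurrence, the invariant density is $\rho\equiv 1$, which is trivially in $\mathrm{BV}$. Assumption (A1), i.e.\ exponential decay of correlations of $\mathrm{BV}$ against $L^1$, is the classical Lasota--Yorke theorem for piecewise linear uniformly expanding maps. For (A2), I would argue directly from $f^j(x)=2^j x\bmod 1$: on each dyadic cylinder $[m/2^j,(m+1)/2^j)$, $m\in\{0,\dots,2^j-1\}$, the condition $d(f^j x,x)\le r_n$ reduces to the linear inequality $|(2^j-1)x-m|\le r_n$, which carves out an interval of length at most $2r_n/(2^j-1)$. Summing over the $2^j$ cylinders gives $\mathrm{Leb}(E_j(r_n))\le 4r_n$ uniformly in $j\ge 1$, so (A2) holds with $\beta_0$ arbitrarily close to $1$, with the restriction $j\le (\log n)^2$ playing no role.

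With hyperbolic recurrence established, I can invoke case (a)(i) of Theorem~\ref{thm:almost-sure}. Since $\rho\equiv 1$, the positivity hypothesis is trivial and $\min_{x\in\XX}\rho(x)=1$, so $2c(\min\rho)>1$ is equivalent to $c>1/2$, which is precisely the assumption of the corollary. The theorem then yields
\[
\mathrm{Leb}\bigl(x\in\XX:\min_{k\le n}d(f^k x,x)<c\log\log n/n\text{ eventually}\bigr)=1,
\]
which is the claim.

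No step presents a genuine obstacle; the entire argument is a hypothesis check, with the only computational ingredient being the one-line measure estimate for $E_j(r_n)$ based on the explicit dyadic structure of the doubling map.
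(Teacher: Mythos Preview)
Your proposal is correct and matches the paper's approach: the corollary is stated immediately after Theorem~\ref{thm:almost-sure} as a direct specialisation, with the paper noting only that ``for case (a)(i), we can be explicit on the range of $c$ in certain examples,'' and providing no further argument. Your hypothesis check (BV density, (A1) via Lasota--Yorke, (A2) via the explicit dyadic structure giving $\mathrm{Leb}(E_j(r_n))\lesssim r_n$, and then $\min\rho=1$ so $2c>1$) is exactly the intended verification, just spelled out in more detail than the paper bothers with.
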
 

\subsection{On extreme statistics for recurrence}\label{sec.psi}
In Theorem \ref{thm:poisson_like}, we considered the process $X_n=\dist(f^nx,x)$.
A more general problem is to determine corresponding limit laws for the process
$X^{\psi}_n=\psi(\dist(f^nx,x))$, where $\psi:[0,\infty)\to\mathbb{R}$ is a monotone
decreasing function, thus taking maximum at $0$, i.e. when $f^n(x)=x$. 
As discussed in Remark \ref{rmk.collet}, the special case $\psi(z)=-\log z$ was considered in \cite{Collet}. 
Incorporation of $\psi$ is relevant in the study of \emph{Extreme Value Theory}, especially
for determining distribution limit laws governing the maximum process
\begin{equation}\label{eq.max-rec}
M^{\psi}_n(x):=\max_{1\leq k\leq n}\psi(\dist(f^kx,x)).
\end{equation}
In this case $r_n$ is defined relative a linear scaling sequence of the form $u/a_n+b_n$ via
$$\psi(r_n(u))=\frac{u}{a_n}+b_n,\quad a_n, b_n\in\mathbb{R}.$$ 
In the case of a hitting time maximum process
\begin{equation*}\label{eq.max-hit}
\widetilde{M}^{\psi}_n(x):=\max_{1\leq k\leq n}\psi(\dist(f^kx,\zeta)),
\end{equation*}
(defined relative to a reference point $\zeta\in \XX$), the scaling sequences $a_n,b_n$ and
limit law function $G(u)$ for $\lim_{n\to\infty}\mu\{\widetilde{M}^{\psi}_n(x)\leq u/a_n+b_n\}$ can be understood for a wide class of dynamical systems, see \cite{Letal}.
For the related recurrence maxima process given in \eqref{eq.max-rec} a full classification of the corresponding limits $G(u)$ (relative to a given $\psi$) is beyond the scope of this paper. 
We illustrate with the following examples.
\begin{example}
Suppose that $\psi(z)=-\log z$. We choose $a_n, b_n$ so that $\psi(r_n)$ matches $\tau/2n$, with
$\tau\equiv\tau(u)$. We take $a_n=1$ and $b_n=\log (2n)$ giving $\tau(u)=e^{-u}$. This example
is consistent with the result of \cite[Theorem 4.1]{Collet}. 
\end{example}
\begin{example}
Suppose that $\psi(z)=z^{-\alpha}$ with $\alpha>0$. In this case we obtain $\tau(u)=u^{-\frac{1}{\alpha}}$
in Theorem \ref{thm:poisson_like}
via sequences $a_n=(2n)^{-\alpha}$ and $b_n=0$. 
\end{example}
Thus for scaling sequences $r_n$ defined by $\psi(r_n)=u/a_n+b_n$, the canonical functional forms arising for $\tau(u)$ are consistent with the Type I-III distributions arising in extreme value theory (Gumbel, Fr\'echet and Weibull). In general, an explicit formula for $G(u)$ is not easily obtained, but corresponds
to an \emph{averaged} version of these extreme distribution types. We give an
example below where $G(u)$ can be made explicit. We also give an example in 
Section~\ref{subsec.asym-hyp-rec}.

\begin{example}\label{ex:beta}
Consider the transformation $f(x)=\beta x\mod 1$, with $\XX=[0,1]$ and $\beta^{-1}=\beta-1$ (with
$\beta\in(1,2)$). This map is uniformly expanding and Markov. The invariant density has explicit
formula
$$\rho(x)=\frac{\beta^3}{\beta^2+1}\mathbf{1}_{[0,\beta^{-1})}(x)
+\frac{\beta^2}{\beta^2+1}\mathbf{1}_{[\beta^{-1},1]}(x),$$
and is piecewise constant. We therefore obtain for each $k\geq 0$
\begin{multline*}
\mu(R_n(r_n,x)=k)\underset{n\to\infty}{\longrightarrow} \\ \frac{1}{k!}\left\{\frac{1}{\beta}\left(\frac{\beta^3\tau}{\beta^2+1}\right)^k
e^{-\tau\frac{\beta^3}{\beta^2+1}}
+\frac{1}{\beta^2}\left(\frac{\beta^2\tau}{\beta^2+1}\right)^ke^{-\tau\frac{\beta^2}{\beta^2+1}}
 \right\}.
\end{multline*}
Hence, once $\tau(u)$ is specified as in the previous examples (e.g. via choice of $\psi$ and appropriate scaling sequences $a_n,b_n$) then we obtain $G(u)$ accordingly.
\end{example}

\section{Dynamical systems with asymptotic hyperbolic recurrence}\label{sec.asym-hyp-rec}
In this section we consider dynamical systems that do not satisfy Definition \ref{def.hyp-rec}.
A scenario that can arise includes the case where (A1) fails to hold, such as the system having sub-exponential decay of correlations. Similarly, the assumption that the invariant density $\rho(x)$
is of bounded variation type is also restrictive. However, the sources that lead to (say) sub-exponential
decay of correlations or lack of regularity of $\rho$ maybe due to local phenomena, such as the map
having a neutral fixed point, and/or having a critical point. This allows us to sometimes approximate
those maps using dynamical systems that are hyperbolic recurrent. This leads to the following definition.

\begin{definition}
We say that $(\XX, f, \mu)$ is \emph{asymptotically hyperbolic} if for each $\eps>0$ we may remove some set $U_\eps= \cup_i U_{\eps, i}$ for some connected sets $U_{\eps, i}$ such that
\begin{enumerate}
\item[ (i)] $\mu(U_\eps) \le \eps$
\item[(ii)] for $Y_\eps:= \XX\setminus U_\eps$ and $\tau= \tau_\eps$ the first return time to $Y_\eps$, the system $(Y_\eps, F_\eps= f^{\tau_\eps}, \mu_\eps)$ is hyperbolic recurrent for $\mu_\eps = \mu|_{Y_{\eps}}/\mu(Y_\eps)$.  
\end{enumerate}
\end{definition}
We recall that $\mu$ is an acip, so that it is implicit in the definition of asymptotically hyperbolic
that $\mathbb{E}_{\mu_{\eps}}(\tau_{\eps})<\infty.$
Then we may apply Theorem~\ref{thm:poisson_like} to $(Y_\eps, F_\eps= f^{\tau_\eps}, \mu_\eps)$ to prove the following.

\begin{theorem}\label{thm.asym-hyp-rec}
If $(\XX, f, \mu)$ is asymptotically hyperbolic then the conclusions of Theorem~\ref{thm:poisson_like}
and Theorem~\ref{thm:almost-sure} hold for $(\XX, f, \mu)$.
\end{theorem}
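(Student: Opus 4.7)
The plan is to reduce to the induced hyperbolic recurrent system $(Y_\eps, F_\eps, \mu_\eps)$ supplied by the definition of asymptotic hyperbolicity, apply Theorems~\ref{thm:poisson_like} and~\ref{thm:almost-sure} to it, and transfer the conclusions back to $(\XX, f, \mu)$ using the Birkhoff ergodic theorem for $\mathbf{1}_{Y_\eps}$ to handle the time-change. Since $\mu(U_\eps)\le\eps$, sending $\eps\to 0$ along a countable sequence then recovers the full statement on $\XX$.

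For Theorem~\ref{thm:poisson_like}, fix $\eps>0$, $\tau>0$ and $r_n=\tau/(2n)$. Set
\[
N_n(x):=\#\{1\le j\le n:\,f^j(x)\in Y_\eps\},\qquad \widetilde R_m(r,x):=\#\{1\le k\le m:\,\dist(F_\eps^k(x),x)\le r\}.
\]
For $x\in Y_\eps$ at distance greater than $r_n$ from $\partial Y_\eps$ (a condition failing on a set of $\mu$-measure $o(1)$, since $\partial Y_\eps$ is an at most countable set of endpoints), any $f$-close return $f^j(x)\in B(x,r_n)$ lies in $Y_\eps$, hence equals $F_\eps^k(x)$ for some $k\le N_n(x)$; thus $R_n(r_n,x)=\widetilde R_{N_n(x)}(r_n,x)$. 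Setting $M_n:=\lfloor n\mu(Y_\eps)\rfloor$ and $\tilde\tau:=\tau\mu(Y_\eps)$, we have $\tilde\tau/(2M_n)=r_n(1+O(1/n))$, and by Birkhoff $|N_n-M_n|=o(n)$ $\mu_\eps$-a.s. A first-moment estimate
\[
\mathbb{E}_{\mu_\eps}\bigl|\widetilde R_{N_n}(r_n,\cdot)-\widetilde R_{M_n}(\tilde\tau/(2M_n),\cdot)\bigr|\lesssim (|N_n-M_n|+1)\cdot r_n \to 0
\]
(using $\sup_x\mu_\eps(B(x,r))\lesssim r$, which follows from $\rho_\eps\in\mathrm{BV}$) combined with Markov's inequality shows these two integer counts agree with $\mu_\eps$-probability $1-o(1)$. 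Applying Theorem~\ref{thm:poisson_like} to $(Y_\eps,F_\eps,\mu_\eps)$ yields
\[
\mu_\eps\bigl(\widetilde R_{M_n}(\tilde\tau/(2M_n),x)=k\bigr)\longrightarrow \int_{Y_\eps}\frac{\tilde\tau^k\rho_\eps(x)^{k+1}}{k!}e^{-\rho_\eps(x)\tilde\tau}\,dx,
\]
and the substitutions $\rho_\eps=\rho/\mu(Y_\eps)$, $\tilde\tau=\tau\mu(Y_\eps)$, together with $\mu|_{Y_\eps}=\mu(Y_\eps)\mu_\eps$, collapse the right-hand side to $\int_{Y_\eps}\tfrac{\tau^k\rho(x)^{k+1}}{k!}e^{-\tau\rho(x)}\,dx$. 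With $\mu(U_\eps)\le\eps$, letting $\eps\to 0$ gives \eqref{eq:poisson_like}.

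For Theorem~\ref{thm:almost-sure}, the same rescaling carries the hypotheses to the induced system. In case (a)(i), writing $\tilde c:=c\mu(Y_\eps)$ and using $N_n\sim n\mu(Y_\eps)$, the lower bound $r_n>c\log\log n/n$ translates to $r_n>\tilde c\log\log N_n/N_n$ eventually, while $2\tilde c\min_{Y_\eps}\rho_\eps=2c\min_{Y_\eps}\rho\ge 2c\min_\XX\rho>1$; Theorem~\ref{thm:almost-sure}(a)(i) applied to $F_\eps$ then gives $\min_{k\le N_n}\dist(F_\eps^kx,x)<r_n$ eventually for $\mu$-a.e.\ $x\in Y_\eps$, which dominates $\min_{j\le n}\dist(f^jx,x)$. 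Case (a)(ii) is analogous, noting that the summability condition \eqref{eq:asure-sum} for $\rho_\eps$ follows from that for $\rho$ after rescaling (cf.\ Example~\ref{ex:zero}). Case (b) is preserved under the same induction procedure, or may be invoked directly from \cite[Theorem C]{KKP2}. Taking a countable sequence $\eps_m\to 0$ yields the a.s.\ conclusion $\mu$-everywhere on $\XX$.

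The principal obstacle is the random-time-change step in paragraph two: Theorem~\ref{thm:poisson_like} for $F_\eps$ applies at the deterministic index $M_n$, whereas the natural induced count is at the random index $N_n$. The integer-valued first-moment estimate above handles this, but it relies on the uniform bound $\mu_\eps(B(x,r))\lesssim r$, which in turn comes from the $\mathrm{BV}$-regularity of $\rho_\eps$ inherent in the hyperbolic recurrence of $F_\eps$. The auxiliary issue of estimating $\mu\{x\in Y_\eps:\dist(x,\partial Y_\eps)\le r_n\}$ is straightforward, since $\partial Y_\eps$ is at most countable and $\rho\in L^1$.
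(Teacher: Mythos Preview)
Your strategy coincides with the paper's: induce to $(Y_\eps, F_\eps, \mu_\eps)$, use the ergodic theorem to relate the $f$- and $F_\eps$-clocks, apply Theorems~\ref{thm:poisson_like} and~\ref{thm:almost-sure} to the induced system, rescale via $\rho_\eps=\rho/\mu(Y_\eps)$ and $\tilde\tau=\tau\mu(Y_\eps)$, and let $\eps\to 0$. The only substantive difference is in the time-change step: the paper sandwiches the distribution function $\mu(R_n\le k)$ between its $F_\eps$-analogues at \emph{deterministic} times $n/g^{\pm}(\eps,\delta)$ and then shrinks $\delta$, whereas you try to show that the integer counts $\widetilde R_{N_n}$ and $\widetilde R_{M_n}$ agree with high probability via a first-moment bound.

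That first-moment bound has a gap. The justification ``$\sup_x\mu_\eps(B(x,r))\lesssim r$'' controls hitting probabilities for \emph{fixed} balls, but what you actually need is the recurrence probability $\mu_\eps\{x:d(F_\eps^jx,x)<r_n\}=\mu_\eps(E_j(r_n))$, where the target ball is centred at $x$ itself; moreover the summation window $[M_n\wedge N_n,\,M_n\vee N_n]$ is random and correlated with the hits, so you cannot simply multiply expectations. A repair is available: restrict by Birkhoff to $\{|N_n-M_n|\le\eta n\}$, then bound $\mu_\eps(E_j(r_n))\lesssim r_n$ for $j$ in the deterministic window $[M_n-\eta n,\,M_n+\eta n]$ via (A1) and the partition argument of Proposition~\ref{prop.R-N} (all such $j$ are large, so short-return issues do not arise), giving an expected number of gap-hits of order $O(\eta)$; then send $\eta\to 0$ after $n\to\infty$. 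The paper's CDF sandwich avoids this complication entirely, since only deterministic times enter and no count of hits in the gap is required.
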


We prove Theorem \ref{thm.asym-hyp-rec} in Section \ref{pf:thm.asym-hyp-rec}. In the next section we illustrate with examples for which Theorem \ref{thm.asym-hyp-rec} applies.

\subsection{Dynamical systems exhibiting asymptotic hyperbolic recurrence.}\label{subsec.asym-hyp-rec}
Given $\gamma\in (0, 1)$, define $f=f_\gamma:[0, 1]\to [0, 1]$ by 
\[f_\gamma(x) :=\begin{cases} x(1+2^\gamma x^\gamma) & \text{ if } x\in [0, 1/2),\\
2x-1 & \text{ if } x\in [1/2, 1].
\end{cases}
\]
This is a standard form of Manneville-Pomeau map and it has an acip $\mu=\mu_\gamma$.  Note that this is not hyperbolic recurrent, for example the density is not in BV.

Now define a Misiurewicz-Thurston unimodal map as follows.  Suppose that $f:[0, 1]\to [0, 1]$ is $C^2$,  has negative Schwarzian derivative and has a unique critical point $c$, which is moreover non-flat\footnote{In fact we can also handle systems with flat critical points as in \cite{Zwe04}, but we omit the details here.}.  Furthermore we assume that there is some $n\ge 2$ such that $f^n(c)$ is a repelling periodic point.  For details of these maps see for example \cite{DemTod23}, a basic example 
being $x\mapsto 4x(1-x)$. Again there is an acip $\mu$, but the system is not hyperbolic recurrent, the density is not in BV.

\begin{proposition}
If $([0, 1], f, \mu)$ is a Manneville-Pomeau map with $\mu$ an acip, or is a Misiurewicz-Thurston unimodal map, with $\mu$ an acip, then it is asymptotically hyperbolic.
\end{proposition}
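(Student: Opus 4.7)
The plan is to verify, for each family, that a small neighborhood of the single obstruction to uniform hyperbolicity (the neutral fixed point for Manneville--Pomeau, the critical orbit for Misiurewicz--Thurston) can be removed so that the first-return system on the complement is hyperbolic recurrent in the sense of Definition~\ref{def.hyp-rec}.

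For the Manneville--Pomeau map $f_\gamma$ and a given $\varepsilon>0$, I would take $U_\varepsilon=[0,\delta_\varepsilon)$, choosing $\delta_\varepsilon>0$ small enough that $\mu(U_\varepsilon)\le\varepsilon$; this is possible since $\mu$ is a probability measure even though its density blows up at $0$. On $Y_\varepsilon=[\delta_\varepsilon,1]$ the first-return map $F_\varepsilon=f^{\tau_\varepsilon}$ is the standard Young induced map: a piecewise smooth, uniformly expanding Markov map with countably many full branches accumulating at $\delta_\varepsilon$, with bounded distortion and satisfying a Lasota--Yorke inequality on $\mathrm{BV}$. This yields exponential decay of correlations in $\mathrm{BV}$ versus $L^1$, and because the original density $\rho$ is real-analytic away from $0$, its restriction to $Y_\varepsilon$ (and hence the density of $\mu_\varepsilon$) lies in $\mathrm{BV}$.

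For the Misiurewicz--Thurston unimodal map the critical orbit is finite by assumption, so the post-critical set $P=\overline{\{f^k(c):k\ge 1\}}$ consists of finitely many points. Taking $U_\varepsilon$ to be a small open neighborhood of $\{c\}\cup P$ with $\mu(U_\varepsilon)\le\varepsilon$, the Misiurewicz condition together with negative Schwarzian and non-flatness of the critical point guarantees, via the classical inducing construction recalled in \cite{DemTod23}, that the first-return map $F_\varepsilon$ to $Y_\varepsilon$ is piecewise smooth, uniformly expanding and of bounded distortion. The density of $\mu$ is smooth off the critical orbit, so on $Y_\varepsilon$ the density of $\mu_\varepsilon$ again lies in $\mathrm{BV}$, and exponential decay of correlations for $F_\varepsilon$ on $(Y_\varepsilon,\mu_\varepsilon)$ follows from the Lasota--Yorke framework applied to $F_\varepsilon$.

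It remains to verify (A2) for the induced system. Once $F_\varepsilon$ is a Rychlik/Gibbs--Markov piecewise expanding system with $\mathrm{BV}$ density, the estimate $\mu_\varepsilon(E_j(r_n))\le Cr_n^{\beta_0}$ uniformly in $j\le(\log n)^2$ is an instance of the short-time recurrence bounds established in \cite[Section 13]{HKKP} and \cite{Letal}, applied directly to $F_\varepsilon$. The main obstacle I anticipate is ensuring that a single choice of $U_\varepsilon$ suffices for all three conditions simultaneously: if verifying (A2) forces excising additional small neighborhoods of short-period periodic orbits of $F_\varepsilon$, I would enlarge $U_\varepsilon$ by a further set of $\mu$-measure at most $\varepsilon$, preserving the bound on $\mu(U_\varepsilon)$ up to a harmless factor of two, which is acceptable since $\varepsilon>0$ is arbitrary. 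The required integrability $\mathbb{E}_{\mu_\varepsilon}(\tau_\varepsilon)<\infty$ is automatic by Kac's formula since $\mu$ is a probability measure on the full system.
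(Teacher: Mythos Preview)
Your approach is the same as the paper's: excise a small neighbourhood of the obstruction (the neutral fixed point for Manneville--Pomeau, the postcritical orbit for Misiurewicz--Thurston) and verify that the first-return system is Gibbs--Markov, hence hyperbolic recurrent. The only point where the paper is more careful is in the choice of the excised set. For the Manneville--Pomeau map, a generic cut-off $\delta_\varepsilon$ does \emph{not} give a first-return map with full branches: for instance the left branch maps $[\delta_\varepsilon,1/2)$ onto $[f(\delta_\varepsilon),1)\subsetneq Y_\varepsilon$, so your description ``countably many full branches accumulating at $\delta_\varepsilon$'' is inaccurate as stated. The paper avoids this by taking $\delta_\varepsilon=x_n$ to be a specific left-branch preimage of $1/2$, which forces the induced system to be genuinely Markov. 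Likewise, for the Misiurewicz--Thurston case the paper stresses that the neighbourhood of the critical orbit is chosen compatibly with the existing Markov structure of the map (citing \cite{DemTod23}), rather than an arbitrary $\varepsilon$-neighbourhood. With these refinements your argument is exactly the paper's; without them you would need to fall back on a Rychlik-type (non-Markov) verification of (A1)--(A2), which is also feasible but not what you wrote.
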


\begin{proof}
In the Manneville-Pomeau case, we can ensure that $(Y_\eps, F_\eps, \mu_\eps)$ is Markov by removing some $[0, x_n)$ for $x_n$ such that $f^n(x_n)=1/2$ and $f^j(x_n) \in [0, 1/2]$ for $j=0, \ldots, n-1$.  Note that $\mu([0, x_n))\to 0$ as $n\to \infty$.  In this case the system $(Y_\eps, F_\eps, \mu_\eps)$ 
is Gibbs-Markov and is hence hyperbolic recurrent. 

In the Misiurewicz-Thurston case we may remove an $\eps$-neighbourhood of the orbit of $f(c)$.  This can be done in such a way that the resulting system $(Y_\eps, F_\eps, \mu_\eps)$ is Markov by exploiting the Markov structure of the original system, see for example \cite[Section 2.1]{DemTod23}.  Again this system is then Gibbs-Markov and hence hyperbolic recurrent.
\end{proof} 

\subsubsection*{The intermittent cusp map.}
We consider the dynamical system $(\XX,f,\mu)$ given by $f(x)=1-2\sqrt{|x|}$ for $\XX=[-1,1]$.
This map has a neutral fixed point at $x=-1$ (quadratic tangent with diagonal), and a derivative
singularity at $x=0$. In particular we have $|f'(x)|=|x|^{-1/2}$. However, the statistical properties 
are well understood and this map fits the definition of being asymptotically hyperbolic, see \cite{CHMV}. The relevant inducing set can be taken to be $Y_{\epsilon}=[-1+\epsilon, 0)\cup(0,1-\epsilon]$. The map has an ergodic invariant (probability) density function given by $\rho(x)=\frac{1}{2}(1-x)$. Hence,
relative (say) to the Manneville-Pomeau map we can make the limit distribution function 
for $\mu(R_n(r_n,x)=k)$ explicit, and also determine more precisely the almost sure rates governing the growth of 
$\min_{j\leq n}X_j(x)$.

To determine the limit distribution, we can apply Theorem~\ref{thm.asym-hyp-rec} to deduce that
along $r_n=\tau/2n$ we obtain
\begin{equation*}
\lim_{n\to\infty}\mu(R(x,r_n)=k)=\int_{-1}^{1}\frac{\tau^k}{k!}\left(\frac{1-x}{2}\right)^{k+1}
e^{-\frac{\tau}{2}(1-x)}\,dx.
\end{equation*} 
The right-hand integral can be evaluated easily to give the limit distribution $G(\tau)$ given by
\begin{equation*}
G(\tau)=2(k+1)\left\{\frac{1}{\tau^2}-e^{-\tau}\sum_{j=0}^{k+1}\frac{\tau^{k-j-1}}{(k+1-j)!}\right\}.
\end{equation*}
In particular when $k=0$, we obtain
\begin{equation*}
G(\tau)=\frac{2}{\tau^2}-2e^{-\tau}(\tau^{-1}+\tau^{-2}).
\end{equation*}
Notice that for large $\tau$, the limit distribution is heavy tailed (power law) in $\tau$, rather than of exponential type. We can contrast this limit distribution to that of Example \ref{ex:beta}. The mechanism
for the heavy tail in this example is due to the density $\rho(x)$ taking a value of zero (at $x=1$). 

For the almost sure bounds on $\min_{j\leq n}X_j(x)$, we apply case (a)(ii)
of Theorem \ref{thm:almost-sure} by taking $\gamma_0=1/2$ in the summability criteria of equation 
\eqref{eq:asure-sum}. This gives for all $\eta>0$,
$$\mu\left(\min_{j\leq n}X_{j}(x)<\frac{(\log n)^{\frac{1}{2}+\eta}}{n}\;\textrm{eventually}\right)=1.$$
For the lower bound, we can apply the summability criteria of case (b). Since $\rho(x)$ is bounded,
an almost sure lower bound is given by a sequence $r_n$ with $\sum_{n}r_n<\infty$.
However, if we follow the proof of Theorem~\ref{thm.asym-hyp-rec}, then
given $\epsilon>0$ and inducing set $Y_{\epsilon}$ we can find $c_0\equiv c_{\epsilon}$ so that 
case (a)(i) holds for $\mu$-almost every $x\in Y_{\epsilon}$ (along the sequence $r_n=c\log\log n/n$ with
$c>c_0$). In particular as $\epsilon\to 0$, $\mu(Y_{\eps})\to 1$, but the lower bound constant $c_0$ then tends to infinity.

\section{Proof of Theorem \ref{thm:poisson_like}}\label{sec.pf.thm:poisson_like}
The idea of proof is to first remove dependence on the initial condition $x$. This will be achieved using
decay of correlations. The second step of the proof then involves study of a hitting counting function
of the form $N_n(\Delta(\zeta_k),x)$, where $\Delta(\zeta_k)$ is a small (shrinking) interval centred on 
$\zeta_k$. The points $\zeta_k$ are
uniformly spaced sequence in $\XX$. As $\Delta$ shrinks (with increasing $n$), the number of such points 
$\zeta_k$ then increases. We are then led to average $N_n(\Delta(\zeta_k),x)$ over $\zeta_k$. This
averaging explains the integral arising in equation \eqref{eq:poisson_like}.
  
We remark that the case $\{R_n=0\}$ matches the conclusion
of \cite[Theorem 4.1]{Collet} in the case $\psi(z)=-\log z$. There, 
an \emph{averaged} exponential law distribution is obtained. For the case
$\{N^{\psi}_n=k\}$ a best approach here seems a variant
of the \emph{Chen-Stein method} as applied in \cite[Theorem 2.1]{CC}.  Compared to the aforementioned references, the setting here is simpler since our measure admits
a density $\rho(x)$ which is bounded. Moreover, decay of correlations
estimates for relevant indicator functions (appearing in the proofs) 
have bounded $\mathrm{BV}$ and $L^1$ norms. This avoids
the need to approximate indicator functions by Lipschitz (or H\"older) functions. 
Our proofs would still go through in these cases, but with additional minor modifications.

\subsubsection*{Notation} Regarding notation, for functions $g(x), h(x)$ defined either over
$\mathbb{R}$ or $\mathbb{N}$, we use the convention $f\lesssim g$
if there exists a constant $C>0$ such that $f(x)\leq Cg(x)$ as $x\to\infty$. Equivalently
we sometimes use the notation $f(x)=O(g(x))$.  We say that $f\approx g$ if there exists a constant
$C>0$ such that $C^{-1}\leq f(x)/g(x)\leq C$ for all sufficiently large $x$.
Unless stated otherwise, we use $C_1,C_2,\ldots$ to denote generic constants which do not depend on $x$. 

\subsection{Removing dependence on the initial condition} 
We proceed along similar lines to the proof of \cite[Theorem 4.1]{Collet}. Suppose $\mathcal{X}$
is the interval $[0,1]$, and set up a partition $\mathcal{U}_m$ of $\XX$ into equally sized intervals
$\Delta_k\equiv\Delta(\zeta_k)$, with $1\leq k\leq m$, and $\zeta_k$ the centre of $\Delta_k$. Thus each interval $\Delta_k$ has size $1/m$. We let $\delta=\frac{1}{2}|\Delta|$, where $\Delta$ is any
element of $\mathcal{U}_m$. We let $\zeta$ denote the centre of $\Delta$. Given $\Delta$, and $r\ge\delta$
we introduce sets $\Delta^{\pm}$ and $\Delta_0$ given by:
\begin{align*}
\Delta^{\pm} &:=\left\{x\in\XX:\,d(x,\Delta)\leq r\pm\delta\right\},\\
\Delta_0 &:= \left\{x\in\XX:\,d(x,\Delta)\leq r\right\},
\end{align*}  
see Figure~\ref{fig:Deltas} for a schematic picture of this.

\begin{figure}[h]
\begin{tikzpicture}[thick, scale=0.5]

\draw[ - ] (0,5) -- (20,5);
\draw[ thick, - ] (0,4.7) -- (0,5);
\draw[ thick, - ] (20,4.7) -- (20,5);
\draw (20,5) node[below] {{\small $\Delta^+$}};
\draw[ thick, - ] (10,2.5) -- (10,5);
\draw (10,2) node[below] {{\small $\zeta$}};

\draw[ - ] (1.5,4.3) -- (18.5,4.3);
\draw[ thick, - ] (1.5,4) -- (1.5,4.3);
\draw[ thick, - ] (18.5,4) -- (18.5,4.3);
\draw (18.5,4.2) node[below] {{\small $\Delta_0$}};

\draw[ - ] (3,3.6) -- (17,3.6);
\draw[ thick, - ] (3,3.3) -- (3,3.6);
\draw[ thick, - ] (17,3.3) -- (17,3.6);
\draw (17,3.6) node[below] {{\small $\Delta^-$}};

\draw[ - ] (8.5,2.9) -- (11.5,2.9);
\draw[ thick, - ] (8.5,2.6) -- (8.5,2.9);
\draw[ thick, - ] (11.5,2.6) -- (11.5,2.9);
\draw (11.5,2.6) node[below] {{\small $\Delta$}};

\draw[red, <-> ] (8.5,5.3) -- (10,5.3);
\draw (9.2,5.3) node[above] {{\small $\delta$}};

\draw[red, <-> ] (10,5.3) -- (11.5,5.3);
\draw (10.7,5.3) node[above] {{\small $\delta$}};

\draw[blue, <-> ] (1.5,6.5) -- (8.5,6.5);
\draw (5,6.5) node[above] {{\small $r$}};

\draw[blue, <-> ] (11.5,6.5) -- (18.5,6.5);
\draw (15,6.5) node[above] {{\small $r$}};

\draw[red, <-> ] (0,5.3) -- (1.5,5.3);
\draw (0.7,5.3) node[above] {{\small $\delta$}};

\draw[red, <-> ] (1.5,5.3) -- (3,5.3);
\draw (2.2,5.3) node[above] {{\small $\delta$}};

\draw[red, <-> ] (17,5.3) -- (18.5,5.3);
\draw (17.7,5.3) node[above] {{\small $\delta$}};

\draw[red, <-> ] (18.5,5.3) -- (20, 5.3);
\draw (19.2,5.3) node[above] {{\small $\delta$}};

\end{tikzpicture}
\caption{Schematic picture of $\Delta, \Delta^-, \Delta_0, \Delta^+$.  We will usually take $r=\frac\tau{2n}$ and $\delta= \frac1{2m}$ for $n\ll m$, for example, later in the paper we use $m= n^{1+\beta'}$ below.}
\label{fig:Deltas}
\end{figure}
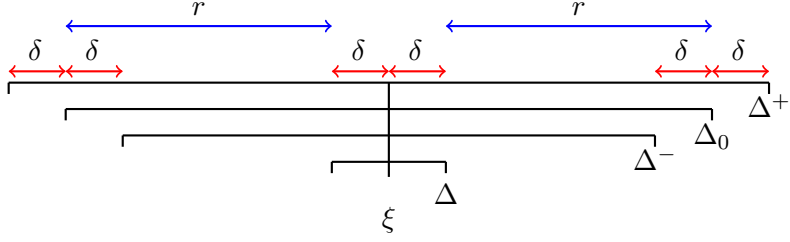

In the later steps of the proof of Theorem \ref{thm:poisson_like}, we take $r=\frac{\tau}{2n}$, and $\frac1m\ll r$, but at the moment we do not specify the dependence of $r$ and $m$ on $n$.
Writing $N_{t, n} (A,x):=\#\{j\in[t,n]:\,f^j(x)\in A\}$, the following result compares $\{R_n(r,x)=k\}$ with $\{N_{1, n}(\Delta_0,x)=k\}$.  
\begin{proposition}\label{prop.R-N}
The following inequality holds:
\begin{multline*}
\left|\mu(R_n(r,x)=k)-
\sum_{\Delta\in\mathcal{U}_m}\mu(\Delta\cap[N_{1, n}(\Delta_0,x)=k])\right|\\
\leq \mu(\tilde E_{g(n)}(r+2\delta))+C_1n\delta+C_2\Theta(g(n)).
\end{multline*}
where $C_1$ and $C_2$ are positive constants (independent of $n$, $m$ and $\delta$).
\end{proposition}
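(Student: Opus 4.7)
The plan is to decompose $\mu(R_n(r,x)=k)$ along the partition $\mathcal{U}_m$ and then, on each cell $\Delta = B(\zeta,\delta)\in\mathcal{U}_m$, replace the $x$-centred ball $B(x,r)$ appearing in $R_n$ by the fixed enlargement $\Delta_0$. The essential geometric input is the triangle-inequality chain
\[
B(\zeta,r-\delta)\subseteq B(x,r)\subseteq \Delta_0=B(\zeta,r+\delta)\subseteq B(x,r+2\delta),\qquad x\in\Delta,
\]
which gives the pointwise inequality $R_n(r,x)\le N_{1,n}(\Delta_0,x)$ on $\Delta$, with equality unless some iterate $f^j(x)$ lands in the $x$-dependent annulus $\Delta_0\setminus B(x,r)$. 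Consequently the quantity to be estimated is at most
\[
\mu\!\left(\bigcup_{j=1}^{n}\bigl\{x\in\XX:\,f^j(x)\in \Delta_0(x)\setminus B(x,r)\bigr\}\right),
\]
where $\Delta_0(x)$ denotes the enlargement of the cell containing $x$. I would split this union at $j=g(n)$ into a short-time and a long-time contribution.

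For the short times $1\le j\le g(n)$, the chain above gives $\Delta_0(x)\setminus B(x,r)\subseteq B(x,r+2\delta)\setminus B(x,r)$, so any contributing $x$ satisfies $d(x,f^j(x))\le r+2\delta$, i.e.\ $x\in E_j(r+2\delta)$. Taking the union over $j\le g(n)$ and using the notation $\tilde E_g$ introduced before the proposition produces the first error term $\mu(\tilde E_{g(n)}(r+2\delta))$ (which I interpret as the $E_{g(n)}$ in the statement); this is precisely what assumption (A2) is designed to bound.

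The principal obstacle is the long-time range $g(n)<j\le n$, where decay of correlations cannot be applied directly because the bad annulus $\Delta_0(x)\setminus B(x,r)$ depends on the base point $x$. The trick is to enlarge it to the $x$-\emph{independent} set
\[
A_\Delta:=\Delta_0\setminus B(\zeta,r-\delta),
\]
which by the triangle-inequality chain contains $\Delta_0\setminus B(x,r)$ for every $x\in\Delta$, has Lebesgue measure $\le 4\delta$, and hence $\mu$-measure $\lesssim \delta$. Applying (A1) with $\varphi_1=\mathbf{1}_\Delta$ (whose BV-norm is bounded by an absolute constant) and $\varphi_2=\mathbf{1}_{A_\Delta}$ (with $\|\mathbf{1}_{A_\Delta}\|_p\lesssim \delta^{1/p}$) yields
\[
\mu\bigl(\{x\in\Delta:\,f^j(x)\in A_\Delta\}\bigr)\lesssim \mu(\Delta)\,\delta+\Theta(j)\,\delta^{1/p}.
\]
Summing over the $m=1/(2\delta)$ cells (so $\sum_\Delta\mu(\Delta)=1$ and the second term picks up a factor of $m$) and then over $g(n)<j\le n$, together with exponential decay $\sum_{j>g(n)}\Theta(j)\lesssim \Theta(g(n))$, recovers exactly the remaining error terms $C_1 n\delta$ and $C_2\delta^{1/p-1}\Theta(g(n))$. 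Combining the short- and long-time estimates delivers the proposition.
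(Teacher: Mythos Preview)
Your proof is correct and follows the same strategy as the paper: decompose along $\mathcal{U}_m$, bound the symmetric difference on each cell by visits to an $x$-independent annulus of width $O(\delta)$, split the time range at $g(n)$, and handle the short-time part via (A2) and the long-time part via decay of correlations summed over cells. The only difference is cosmetic---the paper takes the annulus $\Delta^{+}\setminus\Delta^{-}=B(\zeta,r+2\delta)\setminus B(\zeta,r)$ rather than your $A_\Delta=B(\zeta,r+\delta)\setminus B(\zeta,r-\delta)$---but both have Lebesgue measure $4\delta$ and lead to identical estimates.
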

\begin{proof}
Given $\mathcal{U}_m$, we have the following decomposition of $\{ R_n(r,x)=k\}$
into a (disjoint) union of sets:
$$\{x\in\mathcal{X}: R_n(r,x)=k\}=\bigcup_{\Delta\in\mathcal{U}_m}\{\Delta\cap[R_n(r,x)=k]\}.$$ 
For each element of this union we have the following lemma. 
\begin{lemma}
In the notation above:
$$\{\Delta\cap[R_n(r,x)=k]\}\ominus \{\Delta\cap[N_{1, n}(\Delta_0,x)=k]\}
\subset \Delta\cap\left(\bigcup_{j=1}^{n}f^{-j}(\Delta^+\setminus\Delta^{-})\right).$$
Here $\ominus$ denotes symmetric difference. 
\end{lemma}
\begin{proof}
The proof of the lemma uses the definitions of $\Delta^{\pm}$ and $\Delta_0$. In particular, given
$j\geq 1$, suppose that $x\in\Delta$ and $f^j(x)\not\in\Delta^{+}$. 
Then clearly $f^j(x)\not\in\Delta_0$. Furthermore we have 
\begin{equation*}
\dist(f^jx,x)\geq |\dist(f^jx,\zeta)-\dist(\zeta,x)|> r.
\end{equation*}
Hence, the orbit at time $j$ gives zero count to both $R_n(r,x)$ and $N_{1,n}(\Delta_0,x)$. Similarly,
when $f^j(x)\in\Delta^{-}$, since $r>\delta$ we obtain a unit count contribution to both $R_n(r,x)$ and 
$N_{1, n}(\Delta_0,x)$.
In particular if the whole of the orbit of $x$ avoids $\Delta^+\setminus\Delta^{-}$, then
the events $\{R_n(r,x)=k\}$ and $\{N_{1, n}(\Delta_0,x)=k\}$ have the same itinerary of times
$1\leq t_1<t_2< \cdots <  t_k\leq n$ of unit count values (while all other times give zero count 
contributions). This completes the proof of the lemma.
\end{proof}
To prove Proposition \ref{prop.R-N}, we first of all have 
\begin{multline}\label{eq:R-N.est1}
\left|\sum_{\Delta\in\mathcal{U}_m}\mu(\Delta\cap(R_n(r,x)=k))-
\sum_{\Delta\in\mathcal{U}_m}\mu(\Delta\cap(N_{1, n}(\Delta_0,x)=k))\right|\\
\leq \sum_{\Delta\in\mathcal{U}_m}\mu\left(\Delta\cap
\left(\bigcup_{j=1}^{n}f^{-j}(\Delta^+\setminus\Delta^{-})\right)\right).
\end{multline}
On the right-hand side we make the following decomposition:
\begin{equation*}\label{eq:Delta-decomp}
\begin{split}
\bigcup_{j=1}^{n}f^{-j}(\Delta^+\setminus\Delta^{-}) &=
\left(\bigcup_{j=1}^{g(n)}f^{-j}(\Delta^+\setminus\Delta^{-})\right)\cup
\left(\bigcup_{j=g(n)+1}^{n}f^{-j}(\Delta^+\setminus\Delta^{-})\right)\\
&:=\Delta'\cup\Delta''.
\end{split}
\end{equation*}
Hence, the right-hand side of \eqref{eq:R-N.est1} becomes bounded by
\begin{equation*}
\sum_{\Delta\in\mathcal{U}_m}\left(\mu(\Delta\cap\Delta')+\mu(\Delta\cap\Delta'')\right).
\end{equation*}
By definition of $\tilde E_{g(n)}$ we see that
$$\sum_{\Delta\in\mathcal{U}_m}\mu(\Delta\cap\Delta')\leq\mu(\tilde E_{g(n)}(r+2\delta) ),$$
while by decay of correlations (of $\mathrm{BV}$ against $L^1$) we have
\begin{equation*}
\begin{split}
\sum_{\Delta\in\mathcal{U}_m}\mu(\Delta\cap\Delta'') &\leq\sum_{\Delta\in\mathcal{U}_m}\sum_{j=g(n)+1}^{n}
\left(\mu(\Delta)\mu(\Delta^+\setminus\Delta^{-})+\Theta(j)\|1_{\Delta}\|_{\mathrm{BV}} 
\|1_{\Delta^+\setminus\Delta^{-}}\|_{1}\right),\\
&\leq\sum_{\Delta\in\mathcal{U}_m} n\mu(\Delta)\mu(\Delta^+\setminus\Delta^{-})
+C\Theta(g(n))\delta\\
&\leq C_1n\delta+C_2\Theta(g(n)), 
\end{split}
\end{equation*}
where in the last step we have summed over $\Delta\in\mathcal{U}_m$, noting that the cardinality 
of $\mathcal{U}_{m}$ is $\delta^{-1}$.
The constants $C_1$ an $C_2$ are positive (independent of $n$, $m$ and $\delta$). This completes the proof.
\end{proof}
In the next step, we consider $\mu(\Delta\cap\{N_{1, n}(\Delta_0,x)=k\})$ and 
compare to $\mu(\Delta)\mu(N_{1, n}(\Delta_0,x)=k)$. We do this again
using the set $\tilde E_{g(n)}$ and decay of correlations. We have the following lemma
\begin{lemma}\label{lem:N-Delta.decorr}
Given $t>1$, the following decomposition holds (for constants $C_1,C_2>0$)
\begin{multline*}
\left|\sum_{\Delta\in\mathcal{U}_m}
\mu(\Delta\cap(N_{1,n}(\Delta_0,x)=k))-\sum_{\Delta\in\mathcal{U}_m}
\mu(\Delta)\mu(N_{1,n}(\Delta_0,x)=k)\right|\\
\leq 
C_1\delta^{-1}\Theta(t)+C_2\mu(\Delta_0)t+\mu(\tilde E_{t}(r+2\delta)).
\end{multline*}
\end{lemma}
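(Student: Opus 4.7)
\medskip

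\noindent\textbf{Proof plan.} The strategy is to split the counting window $[1,n]$ into a short prefix $[1,t]$ and a long tail $[t+1,n]$, so that decay of correlations can decouple the tail counting function from $\mathbf{1}_\Delta$. The argument proceeds via three successive approximations, each contributing one of the three error terms in the claimed bound.

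First, I would observe that if $x\in\Delta$ and $f^j x\in\Delta_0$ for some $1\leq j\leq t$, then the triangle inequality gives $\dist(x,f^j x)\leq\dist(x,\zeta)+\dist(\zeta,f^j x)\leq\delta+(r+\delta)=r+2\delta$, so $x\in E_j(r+2\delta)\subset\tilde{E}_t(r+2\delta)$. Hence off the exceptional set $\tilde{E}_t(r+2\delta)$ the events $\{N_{1,n}(\Delta_0,\cdot)=k\}$ and $\{N_{t+1,n}(\Delta_0,\cdot)=k\}$ coincide on each $\Delta$, and summing over the disjoint family $\mathcal{U}_m$ bounds the first approximation error by $\mu(\tilde{E}_t(r+2\delta))$.

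Second, I would write $\mathbf{1}_{\{N_{t+1,n}(\Delta_0,\cdot)=k\}}=\mathbf{1}_A\circ f^t$ where $A:=\{y\in\XX:\sum_{i=1}^{n-t}\mathbf{1}_{\Delta_0}(f^i y)=k\}$, and apply (A1) with $\varphi_1=\mathbf{1}_\Delta$ and $\varphi_2=\mathbf{1}_A$. Since $\Delta$ is a single interval, $\|\mathbf{1}_\Delta\|_{\mathrm{BV}}\leq 3$, and trivially $\|\mathbf{1}_A\|_p\leq 1$, so decay of correlations at time $t$ yields
$$\left|\mu(\Delta\cap\{N_{t+1,n}=k\})-\mu(\Delta)\,\mu(\{N_{t+1,n}=k\})\right|\leq 3\,\Theta(t),$$
where I also use $f$-invariance to identify $\mu(A)=\mu(\{N_{t+1,n}=k\})$. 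Summing over the $|\mathcal{U}_m|\lesssim\delta^{-1}$ partition elements contributes $C_1\delta^{-1}\Theta(t)$. Finally, to swap $\{N_{t+1,n}=k\}$ back to $\{N_{1,n}=k\}$ inside the product-measure sum, I bound the symmetric difference of these events by $\{N_{1,t}(\Delta_0,\cdot)\geq 1\}$; invariance gives $\mu(\{N_{1,t}\geq 1\})\leq\sum_{j=1}^t\mu(f^{-j}\Delta_0)=t\,\mu(\Delta_0)$, which multiplied by $\sum_\Delta\mu(\Delta)=1$ supplies the third term $C_2\mu(\Delta_0)\,t$.

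The main obstacle is the BV-against-$L^p$ step: the factor $\delta^{-1}$ incurred by summing over $\mathcal{U}_m$ has to be absorbed by the decay rate $\Theta(t)$, and for this it is essential that $\|\mathbf{1}_\Delta\|_{\mathrm{BV}}$ remain uniformly bounded, which is built into the choice of an interval partition. The three-way tension between $\delta^{-1}\Theta(t)$, $t\,\mu(\Delta_0)$ and $\mu(\tilde{E}_t(r+2\delta))$ will dictate the eventual choice of $t=t(n)$ in the proof of Theorem~\ref{thm:poisson_like}, and is why the exponential decay of (A1) combined with the early-time bound (A2) is essential for the whole argument to close.
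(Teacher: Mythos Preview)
Your proposal is correct and follows essentially the same three-step approach as the paper: truncate the counting window by removing the first $t$ iterates (controlled by the early-return set $\tilde E_t(r+2\delta)$), apply decay of correlations to decouple $\mathbf{1}_\Delta$ from the shifted counting event (contributing $\delta^{-1}\Theta(t)$ after summing over $\mathcal{U}_m$), and finally restore the full window using the trivial bound $t\,\mu(\Delta_0)$. The only cosmetic difference is that the paper phrases the decoupled factor as $\mu(N_{1,n-t}(\Delta_0)=k)$ rather than $\mu(N_{t+1,n}(\Delta_0)=k)$, but as you note these coincide by $f$-invariance.
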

\begin{proof}
First observe for $t,k\geq 1$ we have

\begin{align*}
\left\{N_{t, n}(\Delta_0)=k\right\}\sm\left\{ \ge 1 \text{ hit to $\Delta_0$ in time } [1,t)\right\} & \subset \{N_{1, n}(\Delta_0)=k\} \\
&\hspace{-5cm} \subset  \left\{N_{t, n}(\Delta_0)=k\right\} \cup\left\{ \ge 1 \text{ hit to $\Delta_0$ in time } [1,t)\right\} 
\end{align*}

and for $k=0$ we have
\begin{align*}
\{N_{t, n}(\Delta_0)=0\}\setminus &\{N_{1, n}(\Delta_0)=0\}\\
&=\{0\textrm{ hit to } \Delta_0 \text{ in time $[t,n]$, at least one in $[0,t)$}.\}.\\
\end{align*}
Hence,
\begin{multline*}
\left|\sum_{\Delta\in\mathcal{U}_m}\mu(\Delta\cap[N_{1, n}(\Delta_0,x)=k])-
\sum_{\Delta\in\mathcal{U}_m}\mu(\Delta\cap(N_{t,n}(\Delta_0,x)=k))\right|\\
\leq \mu(\tilde E_{t}(r+\delta)). 
\end{multline*}
Using decay of correlations, we have
\begin{multline*}
\left|\mu(\Delta\cap(N_{t, n}(\Delta_0,x)=k))-
\mu(\Delta)\mu(N_{1, n-t}(\Delta_0,x)=k)\right|\\
\leq
C\Theta(t)\|\mathbf{1}_{\Delta}\|_{\mathrm{BV}}
\|\mathbf{1}_{N_{1, n-t}(\Delta_0,k)}\|_{1},
\end{multline*}
for some $C>0$.
We can also replace $\{N_{1, n-t}(\Delta_0,x)=k\}$ by 
$\{N_{1,n}(\Delta_0,x)=k\}$. This introduces a further error term which can be quantified
using
\begin{align*}
\{N_{1, n}(\Delta_0,x)= k\} \sm \{ \ge 1\text{ hit to $\Delta_0$ in } (n-t,n]\} &\subset \{N_{1, n-t}(\Delta_0,x)= k\}\\
&\hspace{-6.5cm} \subset \{N_{1, n}(\Delta_0,x)= k\}\cup  \{ \ge 1\text{ hit to $\Delta_0$ in } (n-t,n]\}. 
\end{align*}
So the difference in these sets can be estimated by 
$\mu\left(\bigcup_{j=n-t}^{n}f^{-j}\Delta_0\right)\le t\mu(\Delta_0).$
We now sum over $\Delta\in\mathcal{U}_m$ to 
complete the proof of the lemma.
\end{proof}

Analysis of the set $N_{1,n}(\Delta_0,k)$ is now equivalent to a hitting time  
problem associated to the set $\Delta_0\equiv\Delta_0(\zeta_k)$, with $(\zeta_k)$
a uniformly spaced sequence in $\XX$, and with $k\in[1,m]$. Each set $\Delta_0$ corresponds to 
an interval of (Lebesgue) measure $=2r+2\delta$. Moreover the centre points $\zeta_k$ are independent
of the orbit of $x$. In the sequel we take $r=\tau/2n$ and choose $m=(2\delta)^{-1}$ accordingly.
As we can see from Lemma~\ref{lem:N-Delta.decorr}, some balancing is required on the speed at
which we allow $m\to\infty$ as $n\to\infty$ ($r\to 0$). We make this precise in the next section.

\subsection{Poisson approximation and error terms.}\label{sec.chen-stein}

In this section we consider the process $N_{n}(\Delta_0,x):=N_{0,n}(\Delta_0,x)$.
In conjunction with Proposition \ref{prop.R-N} and Lemma \ref{lem:N-Delta.decorr} we are
then able to determine the limit distribution for $R_n(r,x)$ when $r\equiv \frac{\tau}{2n}$.
Our analysis will also determine the error terms. The behaviour of these error terms will be useful 
in the proof of Theorem \ref{thm:almost-sure}. We will estimate the ``distance'' of $N_{n}(A,x)$ 
to a Poisson distribution
with parameter $n\mu(A)$. To do this, we define
the Total Variation distance $d_{TV}(U,V)$ between two positive integer valued random variables $U,V$ by
\begin{equation*}\label{eq:d-TV}
d_{TV}(U,V):=\frac{1}{2}\sum_{k=0}^{\infty}|\p(U=k)-\p(V=k)|.
\end{equation*} 
We have the following result, which is a re-statement of \cite[Theorem 2.1]{CC} in our context.
\begin{proposition}\label{prop:cs}
Consider the process $N_{n}(A,x)$, and $A\subset\XX$. Then 
$$d_{\mathrm{TV}}\left(N_n(A),\textrm{Poi}(n\mu(A))\right)\leq \mathcal{E}(A, n,p,M),$$
where for any $M\in \mathbb{N}$, 
$$\mathcal{E}(A, n,p,M):=2nM\{\mathcal{E}_1(A, n,p)+\mathcal{E}_2(A, p)\}+\mathcal{E}_3(A, n,p,M),$$
and
\begin{align*}
\mathcal{E}_1(A, n,p)& :=\sup_{j,q}\left|\mu(A\cap\{N_{p, n-j}(A)=q\})-
\mu(A)\mu(N_{p, n-j}(A)=q) \right|,\\
\mathcal{E}_2(A, p) &:=\sum_{j=1}^{p}\mu(A\cap f^{-j}(A)),\\
\mathcal{E}_3(A, n,p,M) &:=4\left(Mp\mu(A)(1+n\mu(A))+\frac{(n\mu(A))^M}{M!}e^{-n\mu(A)}+n\mu(A)^2\right),
\end{align*} 
where for $\mathcal{E}_1(A, n,p)$ the supremum is over $0\leq j\leq n-p$, $0\leq q\leq n-j-p$.
\end{proposition}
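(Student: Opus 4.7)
The plan is to unpack the Chen-Stein Poisson approximation theorem (in the form of \cite[Theorem 2.1]{CC}) and verify that its error terms specialise to $\mathcal{E}_1,\mathcal{E}_2,\mathcal{E}_3$ in the present setting. Set $W_j = \mathbf{1}_A\circ f^j$, so that $N_n(A) = \sum_{j=1}^n W_j$ is stationary under $\mu$ with mean $\lambda = n\mu(A)$. The Chen-Stein framework bounds $d_{\mathrm{TV}}(N_n(A), \mathrm{Poi}(\lambda))$ by a sum of three contributions: a short-range self-correlation term, a long-range decoupling term, and a residual truncation term depending on an auxiliary level $M$.

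First I would introduce a gap length $p$ and, for each index $i\in\{1,\ldots,n\}$, partition $\{1,\ldots,n\}\setminus\{i\}$ into a near block $\{j:0<|i-j|\leq p\}$ and a far block. Stationarity of $\mu$ lets us replace all near-block pair correlations $\mu(A\cap f^{-|i-j|}A)$ by the common sum appearing in $\mathcal{E}_2(A,p)$. Long-range decoupling between $W_i$ and the count $N_{p,n-j}(A)$ over a shifted far block is precisely what $\mathcal{E}_1(A,n,p)$ measures, with the supremum over $(j,q)$ chosen so that arbitrary realisations $\{N_{p,n-j}(A)=q\}$ of the far-block count are allowed.

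Next, the Stein-operator bound on the relevant class of truncated test functions produces a factor of order $M$, which, when summed over the $n$ indices, yields the $2nM$ prefactor in front of $\mathcal{E}_1+\mathcal{E}_2$. The residual $\mathcal{E}_3$ collects three standard by-products of the Chen-Stein machinery: the $Mp\mu(A)(1+n\mu(A))$ term arises from carrying the gap $p$ through the near-block identity; the term $(n\mu(A))^M e^{-n\mu(A)}/M!$ is the Poisson tail beyond the truncation level $M$; and the $n\mu(A)^2$ term comes from the diagonal $i=j$ contribution in the second-moment computation.

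The main obstacle is essentially bookkeeping: matching constants and quantifiers in the general \cite[Theorem 2.1]{CC} to the form stated here, in particular checking that $\mathcal{E}_1$ (a supremum over far-block level sets rather than, say, an $L^1$ discrepancy) is the appropriate way to control the mixing term. Since our $W_j$ are bounded indicator functions and $\mu$ is stationary, there are no regularity subtleties to address; the result of \cite{CC} therefore applies directly, yielding the stated form of $\mathcal{E}$.
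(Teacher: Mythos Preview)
Your proposal is correct and matches the paper's approach: the paper does not give an independent proof of this proposition at all, but simply states it as a restatement of \cite[Theorem~2.1]{CC} in the present notation. Your sketch of how the Chen--Stein error terms $\mathcal{E}_1,\mathcal{E}_2,\mathcal{E}_3$ arise is therefore more detailed than what the paper provides, but the underlying justification is identical---direct citation of \cite{CC}.
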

In the subsequent steps, we estimate the terms $\mathcal{E}_i$ (for $i=1,2,3$, suppressing notation where appropriate). The appropriate estimates will be based on our assumptions (A1), (A2). We remark that
corresponding terms can be also estimated under different dynamical assumptions on the regularity
of the measure and decay of correlations, see for example \cite{HW,PenSau16}.
Using Proposition \ref{prop.R-N} and 
Lemma \ref{lem:N-Delta.decorr} we then obtain the following result.
\begin{proposition}\label{prop.final-error}
\begin{equation}
\begin{split}\label{eq:final.error}
&\left|\mu\{R_n(r,x)=k\}-\sum_{\Delta\in\mathcal{U}_m} \mu(\Delta)\frac{(n\mu(\Delta_0))^k}{k!}
e^{-n\mu(\Delta_0)}\right|\\
&\quad\leq O(1)\left\{\mu(\tilde E_{g(n)}(r+\delta))+n\delta+\delta^{-1}\Theta(g(n))+g(n)\mu(\Delta_0)\right\}\\
&\quad\quad\quad+\sum_{\Delta\in\mathcal{U}_m}\mu(\Delta)\left(2nM\left[\mathcal{E}_1(\Delta_0)+\mathcal{E}_2(\Delta_0)\right]+\mathcal{E}_3(\Delta_0)\right),
\end{split}
\end{equation}
where $\mathcal{E}_i$ are corresponding error terms arising from Proposition \ref{prop:cs}
with $A=\Delta_0$ and appropriately chosen $p$ and $M$. 
\end{proposition}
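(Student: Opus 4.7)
The plan is to combine the triangle inequality with the three approximation results now at our disposal: Proposition~\ref{prop.R-N}, Lemma~\ref{lem:N-Delta.decorr}, and the Chen--Stein bound from Proposition~\ref{prop:cs}. The strategy is to pass from $\mu\{R_n(r,x)=k\}$ to progressively more ``independent'' approximations, accumulating the error terms at each stage, and finally weighting and summing the per-box Chen--Stein bound over the partition $\mathcal{U}_m$.

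First I would apply Proposition~\ref{prop.R-N} to replace $\mu\{R_n(r,x)=k\}$ by $\sum_{\Delta\in\mathcal{U}_m}\mu(\Delta\cap\{N_{1,n}(\Delta_0,x)=k\})$, at a cost of $\mu(E_{g(n)}(r+2\delta))+C_1n\delta+C_2\delta^{1/p-1}\Theta(g(n))$. These terms account for the first three contributions on the right-hand side of \eqref{eq:final.error} (after absorbing constants into the $O(1)$ factor). Next I would apply Lemma~\ref{lem:N-Delta.decorr} with $t=g(n)$ to replace the latter sum by $\sum_{\Delta\in\mathcal{U}_m}\mu(\Delta)\,\mu(N_{1,n}(\Delta_0,x)=k)$, which introduces the further errors $C_1\delta^{-1}\Theta(g(n))+C_2 g(n)\mu(\Delta_0)+\mu(E_{g(n)}(r+2\delta))$. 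Modulo the $O(1)$ constants, these are exactly the $\delta^{-1}\Theta(g(n))$ and $g(n)\mu(\Delta_0)$ terms appearing in the stated bound.

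For the last stage, I would invoke Proposition~\ref{prop:cs} at each fixed $\Delta$ with $A=\Delta_0$ and appropriate $p,M$. Since the definition of total variation distance gives $|\mu(N_n(A)=k)-\mathrm{Poi}(n\mu(A))(k)|\le 2d_{\mathrm{TV}}(N_n(A),\mathrm{Poi}(n\mu(A)))\le 2\mathcal{E}(A,n,p,M)$, multiplying by $\mu(\Delta)$ and summing yields precisely the final term
$$\sum_{\Delta\in\mathcal{U}_m}\mu(\Delta)\bigl(2nM[\mathcal{E}_1(\Delta_0)+\mathcal{E}_2(\Delta_0)]+\mathcal{E}_3(\Delta_0)\bigr).$$
A small bookkeeping point is that Proposition~\ref{prop:cs} is phrased for $N_n=N_{0,n}$, whereas the previous step ends at $N_{1,n}$; the two differ by at most the single event $\{x\in\Delta_0\}$ of measure $\mu(\Delta_0)$, and this discrepancy is easily absorbed into the constant multiple of $g(n)\mu(\Delta_0)$ already present (or into $\mathcal{E}_3$).

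The main obstacle is purely one of organisation rather than substance: one must verify that all three triangle-inequality steps are compatible with a single choice of $g(n)$, $p$, and $M$, so that the error terms in \eqref{eq:final.error} remain uniformly controlled as $\Delta$ varies across $\mathcal{U}_m$. This is straightforward because the sets $\Delta_0$ all have comparable measure (of order $r$), and $\sum_{\Delta}\mu(\Delta)=1$, so the weighted sum of the Chen--Stein terms does not blow up. The quantitative optimisation of $g(n),p,M,\delta$ in terms of $n$ is postponed to the next stage of the proof of Theorem~\ref{thm:poisson_like}, where the right-hand side of \eqref{eq:final.error} is shown to vanish as $n\to\infty$.
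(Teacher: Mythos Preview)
Your proposal is correct and matches the paper's approach exactly: the paper itself introduces Proposition~\ref{prop.final-error} with the single sentence ``Using Proposition~\ref{prop.R-N} and Lemma~\ref{lem:N-Delta.decorr} we then obtain the following result,'' treating it as an immediate consequence of chaining those two results together with the Chen--Stein bound of Proposition~\ref{prop:cs}. Your bookkeeping remark about $N_{0,n}$ versus $N_{1,n}$ is a nice point that the paper leaves implicit.
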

We now choose suitable $r, t, m, M, p$. We choose $M=\log n$ and we put
$p=n^{\gamma}$ for some $\gamma\in(0,1)$. We take $t=(\log n)^2$ so that we can apply (A2),
and we take $m=n^{1+\beta'}$ for some $\beta'>0$ (so that $\delta=\frac{1}{2}n^{-1-\beta'}$).
For the value of $r$, we take $r=\tau/2n$ for $\tau>0$. From  \eqref{eq:final.error},
we see that there is some interdependence between how fast $n\to\infty$ and how fast we choose to 
take $\delta\to 0$. This is due to the requiring both $n\delta\to 0$, and
$\delta^{-1}\Theta(g(n))\to 0$ within the right-hand side of \eqref{eq:final.error}. For purposes
of proving Theorem~\ref{thm:poisson_like}, the choice of parameter rates above is appropriate.
 
Estimation of $\mathcal{E}_3$ uses information independent of the dynamics except for the regularity
of the measure $\mu$. Estimation of $\mathcal{E}_1$ depends on decay of correlations through (A1), 
while estimation of $\mathcal{E}_2$ utilises assumption (A2). We have the following results.
\begin{lemma}\label{lem.e3}
There exists $C>0$ and $c_1>0$, such that for $M=\log n$ and $p=n^{\gamma}$ (for $\gamma\in(0,1)$),
\begin{equation*}
\mathcal{E}_3(\Delta_0, n)\leq Cn^{-c_1}.
\end{equation*}
The constant $C$ depends on $\tau$.
\end{lemma}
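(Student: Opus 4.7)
The plan is to estimate each of the three summands in
\[
\mathcal{E}_3(\Delta_0,n,p,M)=4\left(Mp\mu(\Delta_0)\bigl(1+n\mu(\Delta_0)\bigr)+\frac{(n\mu(\Delta_0))^M}{M!}e^{-n\mu(\Delta_0)}+n\mu(\Delta_0)^2\right)
\]
separately, using only the crude bound on $\mu(\Delta_0)$ coming from boundedness of the invariant density. Since $\rho\in\mathrm{BV}\subset L^{\infty}$ and $\Delta_0$ is an interval of Lebesgue length at most $2(r+\delta)=\tau/n+n^{-1-\beta'}$, for all $n$ large we obtain $\mu(\Delta_0)\le C_{\rho}\tau/n$, and in particular $n\mu(\Delta_0)\le K$ for a constant $K=K(\tau,\|\rho\|_{\infty})$ independent of $n$. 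This is the only input from the dynamics; the rest of the argument is arithmetic in the parameters.

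Substituting $M=\log n$ and $p=n^{\gamma}$ into the first summand yields
\[
Mp\mu(\Delta_0)(1+n\mu(\Delta_0))\lesssim (1+K)C_{\rho}\tau\,(\log n)\,n^{\gamma-1},
\]
which is $O(n^{-c_1'})$ for any $c_1'\in(0,1-\gamma)$. For the middle summand, the bound $n\mu(\Delta_0)\le K$ together with Stirling's inequality gives
\[
\frac{(n\mu(\Delta_0))^M}{M!}e^{-n\mu(\Delta_0)}\le \frac{K^M}{M!}\le \left(\frac{Ke}{M}\right)^{M};
\]
with $M=\log n$, the exponent $M\log(M/(Ke))=(\log n)\log(\log n/(Ke))$ grows faster than any constant multiple of $\log n$, so this term is $o(n^{-A})$ for every $A>0$. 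Finally, $n\mu(\Delta_0)^2\le n(C_{\rho}\tau/n)^2=O(n^{-1})$.

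Combining the three bounds, $\mathcal{E}_3(\Delta_0,n)\le Cn^{-c_1}$ with $c_1\in(0,1-\gamma)$ and $C$ depending on $\tau$ and $\|\rho\|_{\infty}$, as claimed. There is no genuine obstacle in this argument—it is bookkeeping of the Chen--Stein error terms under the specific parameter rates chosen in Section~\ref{sec.chen-stein}. The only minor subtlety worth flagging is verifying that the super-polynomial decay from the Stirling estimate does not bottleneck the bound; the binding contribution is in fact the first summand, which is what forces any admissible exponent to satisfy $c_1<1-\gamma$.
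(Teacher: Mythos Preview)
Your proof is correct and follows essentially the same approach as the paper: bound $\mu(\Delta_0)\lesssim\tau/n$ from boundedness of the density, plug in $M=\log n$, $p=n^{\gamma}$, and estimate the three summands separately, with Stirling handling the middle term and the first summand giving the binding rate $n^{\gamma-1}\log n$. The only cosmetic difference is that you package the bound $n\mu(\Delta_0)\le K$ as a named constant, whereas the paper carries $C\tau$ through explicitly.
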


\begin{proof}
Using our assumption that the invariant density $\rho$ is of bounded variation, we have 
$\mu(\Delta_0)\leq C|\Delta_0|\leq C\tau/n$ for some $C>0$. With our choice $M=\log n$,
and $p=n^{\gamma}$ we obtain
\begin{equation*}
\mathcal{E}_3(\Delta_0, n)\leq O(1)\left((\log n) n^{\gamma-1}\tau(1+\tau)+\frac{(C\tau)^{(\log n)}}{(\log n)!}
e^{-C\tau}+\frac{C^2\tau^2}{n}\right)\\
\end{equation*}
The intermediate term on the right can be estimated using Stirling's formula:
\begin{equation*}
\begin{split}
\frac{(C\tau)^{(\log n)}}{(\log n)!} &\approx (C\tau)^{\log n}(\log n)^{-\log n}e^{\log n}
(\log n)^{-\frac{1}{2}}\\
&\leq O(1)\exp\left\{\left(\log(C\tau)+1\right)\log n-(\log n)(\log\log n)\right\}, 
\end{split}
\end{equation*}
which tends to zero at a super-polynomial speed. Hence the dominant term for $\mathcal{E}_3$
is the quantity of order $n^{\gamma-1}\log n$. The conclusion of the Lemma now follows.
\end{proof}
\begin{remark}
For the proof of Theorem \ref{thm:almost-sure} (in Section~\ref{sec.pf.thm:almost-sure}) 
we will need to quantify this error term
in case $\tau$ depends on $n$, with $\tau\to\infty$ permitted. By inspecting the terms within the proof
Lemma \ref{lem.e3} we see that the same conclusion holds for the lemma provided 
$\tau\equiv\tau_n=o(\log n)$.
\end{remark}
We now consider $\mathcal{E}_1$.
\begin{lemma}\label{lem.e1}
There exists $C>0$ such that for $p=n^{\gamma}$ (and $\gamma\in(0,1)$),
\begin{equation*}
\mathcal{E}_1(\Delta_0, n)=o(n^{-a}).
\end{equation*}
for all $a>0$, (and hence superpolynomial decay).
\end{lemma}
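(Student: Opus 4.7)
The plan is to separate the indicator of $\Delta_0$ from the counting event $\{N_{p, n-j}(\Delta_0) = q\}$ by exploiting the temporal gap of length $p$ between them, so that assumption (A1) applies directly with no further input.

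To implement this, I observe that the counting event depends only on $f^p(x), f^{p+1}(x), \ldots, f^{n-j}(x)$, and hence may be written as
\[
\mathbf{1}_{\{N_{p,n-j}(\Delta_0)=q\}} \;=\; \mathbf{1}_{\Psi_{n-j-p,q}} \circ f^p,
\]
where $\Psi_{n',q} := \{y\in\XX :\#\{0\leq i\leq n' : f^iy \in\Delta_0\} = q\}$. Using $f$-invariance of $\mu$, one has $\mu(\{N_{p,n-j}(\Delta_0)=q\}) = \mu(\Psi_{n-j-p,q})$. Consequently $\mathcal{E}_1(\Delta_0, n, p)$ takes the form
\[
\sup_{j,q}\left|\int \mathbf{1}_{\Delta_0}\cdot(\mathbf{1}_{\Psi_{n-j-p,q}} \circ f^p)\,d\mu \;-\; \int \mathbf{1}_{\Delta_0}\,d\mu \int \mathbf{1}_{\Psi_{n-j-p,q}}\,d\mu\right|,
\]
which is precisely the configuration to which (A1) applies, with $\varphi_1=\mathbf{1}_{\Delta_0}$, $\varphi_2=\mathbf{1}_{\Psi_{n-j-p,q}}$, $\mathcal{B}_1 = \mathrm{BV}$ and $\mathcal{B}_2 = L^1(\mu)$.

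Since $\Delta_0$ is a single interval, $\|\mathbf{1}_{\Delta_0}\|_{\mathrm{BV}} \leq 3$ uniformly in $n$, and trivially $\|\mathbf{1}_{\Psi_{n-j-p,q}}\|_1 \leq 1$. Hence by (A1), $\mathcal{E}_1(\Delta_0, n, p) \lesssim \Theta(p)$. Under the exponential decay hypothesis $\Theta(j) \leq C_0 e^{-\lambda j}$ coming from (A1), substituting $p = n^\gamma$ with $\gamma \in (0,1)$ gives $\mathcal{E}_1(\Delta_0, n) \lesssim e^{-\lambda n^\gamma}$, which is superpolynomially small, i.e., $o(n^{-a})$ for every $a>0$.

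The only genuine obstacle is the bookkeeping: identifying $\{N_{p,n-j}(\Delta_0)=q\}$ as a pullback under $f^p$ and confirming that the $\mathrm{BV}$ bound on $\mathbf{1}_{\Delta_0}$ is uniform in the auxiliary parameters $n, j, q$. Nothing from (A2) is needed here. The argument remains valid if (A1) is weakened to decay against $L^{p'}(\mu)$ for some $p' > 1$, since the only needed input on $\varphi_2$ is $\|\mathbf{1}_{\Psi_{n-j-p,q}}\|_{p'} \leq 1$; and if one has only H\"older decay of correlations in the first argument, a standard Lipschitz-bump approximation of $\mathbf{1}_{\Delta_0}$ yields the same conclusion at a negligibly slower (still superpolynomial) rate.
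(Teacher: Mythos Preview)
Your proof is correct and follows essentially the same route as the paper: apply the decay-of-correlations bound (A1) with $\varphi_1=\mathbf{1}_{\Delta_0}$ and $\varphi_2$ the indicator of the counting event, then observe that $\Theta(n^\gamma)$ decays superpolynomially. You have simply made explicit the pullback under $f^p$ and the uniform norm bounds that the paper leaves implicit in the one-line estimate $\mathcal{E}_1(\Delta_0, n)\leq \Theta(p)\|\mathbf{1}_{\Delta_0}\|_{\mathrm{BV}}\|\mathbf{1}_{\{N_{p, n-j}(\Delta_0)=q\}}\|_{\infty}\leq 2\Theta(n^{\gamma})$.
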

\begin{proof}
For $\mathcal{E}_1$, we just use decay of correlations, and obtain
\begin{equation*}
\mathcal{E}_1(\Delta_0, n)\leq \Theta(p)\|1_{A}\|_{\mathrm{BV}}\|1_{(N_{p, n-j}(A)=q)}\|_{1}\leq 2\Theta(n^{\gamma}).
\end{equation*}  
This error term clearly goes to zero at a super-polynomial speed by assumption on exponential
decay for the rate function $\Theta$.
\end{proof}
\begin{remark}
The error terms $\mathcal{E}_1$ and $\mathcal{E}_3$ do not depend on $\Delta$, and hence can be taken
uniform over $\Delta\in\mathcal{U}_m$.
\end{remark}
Finally we consider $\mathcal{E}_2$. We have the following lemma.
\begin{lemma}\label{lem.e2}
There exists $C>0$, and $c_2>0$ such that for $p=n^{\gamma}$ (and $\gamma\in(0,1)$)
such that
\begin{equation*}
\sum_{\Delta\in\mathcal{U}_m}\mu(\Delta)\mathcal{E}_2(\Delta_0, p)=O(n^{-c_2-1}),
\end{equation*}
with implied constant within $O(\cdot)$ depending on $\tau$.
\end{lemma}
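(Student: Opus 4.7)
The plan is to estimate $\mathcal{E}_2(\Delta_0,p)=\sum_{j=1}^{p}\mu(\Delta_0\cap f^{-j}\Delta_0)$ by splitting the $j$-sum at the scale $t_0=(\log n)^2$ and handling the short-time and long-time contributions with different tools: assumption (A2) for $j\le t_0$, and assumption (A1) (decay of correlations) for $t_0<j\le n^\gamma$.

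The key geometric observation I would use first is that if $x\in\Delta_0\cap f^{-j}\Delta_0$, then both $x$ and $f^j(x)$ lie within distance $r+\delta$ of the common centre $\zeta$ of $\Delta$, so $\dist(x,f^j x)\le 2(r+\delta)$, giving $\Delta_0\cap f^{-j}\Delta_0\subset\Delta_0\cap E_j(2(r+\delta))$. For the short-time regime I would switch the order of summation: using that the invariant density $\rho$ is bounded, $\mu(\Delta)=O(\delta)$, and that since the centres $\zeta$ are $2\delta$-spaced while each $\Delta_0$ is an interval of radius $r+\delta$, each point $x\in\XX$ belongs to at most $O(r/\delta)$ of the sets $\Delta_0$. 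This yields the pointwise bound $\sum_{\Delta}\mu(\Delta)\mathbf{1}_{\Delta_0}(x)=O(r)$, so that
\[
\sum_{\Delta\in\mathcal{U}_m}\mu(\Delta)\,\mu(\Delta_0\cap E_j(2(r+\delta)))=O(r)\cdot\mu(E_j(2(r+\delta))).
\]
Since $r=\tau/(2n)$ and $\delta=\tfrac{1}{2}n^{-1-\beta'}\ll r$, assumption (A2) applies with $r+\delta\approx n^{-1}$ and gives $\mu(E_j(2(r+\delta)))=O(r^{\beta_0})$ uniformly in $j\le t_0$. Summing over $j\le t_0$ contributes $O((\log n)^2 r^{1+\beta_0})=O((\log n)^2 n^{-1-\beta_0})$.

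For the long-time regime I would apply (A1) directly with $\varphi_1=\varphi_2=\mathbf{1}_{\Delta_0}$. Since $\|\mathbf{1}_{\Delta_0}\|_{\mathrm{BV}}=O(1)$ and $\|\mathbf{1}_{\Delta_0}\|_1=\mu(\Delta_0)=O(r)$, we get
\[
\mu(\Delta_0\cap f^{-j}\Delta_0)\le \mu(\Delta_0)^2+\Theta(j)\,\|\mathbf{1}_{\Delta_0}\|_{\mathrm{BV}}\|\mathbf{1}_{\Delta_0}\|_1=O(r^2)+O(r\Theta(j)).
\]
Summing this against $\mu(\Delta)$ over $\Delta\in\mathcal{U}_m$ (which just preserves the same bound since $\sum_\Delta\mu(\Delta)\le 1$) and then over $t_0<j\le n^\gamma$ gives
\[
n^\gamma\cdot O(r^2)+O(r)\sum_{j>(\log n)^2}\Theta(j)=O(n^{\gamma-2})+O(r)e^{-c(\log n)^2},
\]
using exponential decay of $\Theta$. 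Both are $o(n^{-1-\beta_0})$ for $\gamma\in(0,1)$.

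Combining the two regimes, the short-time term dominates and yields $\sum_\Delta\mu(\Delta)\mathcal{E}_2(\Delta_0,p)=O(n^{-1-c_2})$ for any $c_2<\beta_0$, with implicit constant depending polynomially on $\tau$ through $\tau^{1+\beta_0}$. The main obstacle in this argument is really the short-time covering estimate: one has to be careful to exploit the multiplicity bound $O(r/\delta)$ of the thickened cover $\{\Delta_0\}$ so that the geometric factor $r$ appearing after swapping the sums combines with the $r^{\beta_0}$ from (A2) to produce a power strictly better than $r=O(1/n)$; the long-time piece is essentially free given exponential decay of correlations.
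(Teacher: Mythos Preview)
Your argument is correct and in fact streamlines the paper's treatment of the short-time contribution. Both proofs split $\mathcal{E}_2$ at $t_0=(\log n)^2$ and handle the tail $t_0<j\le n^\gamma$ identically via (A1). The difference lies in the short-time sum $\sum_{\Delta}\mu(\Delta)\sum_{j\le t_0}\mu(\Delta_0\cap f^{-j}\Delta_0)$. The paper partitions $\mathcal{U}_m$ into a ``good'' part, where $\mu(\Delta_0\cap\tilde E_g(n^{-a}))<n^{-1-b}$, and a ``bad'' part $\mathcal{F}_n^+$; it then bounds $\#\mathcal{F}_n^+$ through a covering/under-covering argument that transfers the global estimate (A2) for $\mu(\tilde E_g(n^{-a}))$ into a cardinality bound. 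Your route is more direct: the bounded-overlap observation $\sum_{\Delta}\mu(\Delta)\mathbf{1}_{\Delta_0}(x)=O(r)$ (each $x$ lies in $O(r/\delta)$ of the $\Delta_0$'s and $\mu(\Delta)=O(\delta)$) lets you swap the $\Delta$-sum with the integral, reducing immediately to $O(r)\,\mu(E_j(2(r+\delta)))$ and a single application of (A2). This yields $c_2$ essentially equal to $\beta_0$, versus the paper's $c_2\approx \tfrac12 a\beta_1$ after balancing the auxiliary parameter $b$. The paper's decomposition is perhaps more portable to settings where the cover $\{\Delta_0\}$ does not have such clean bounded multiplicity (e.g.\ less regular geometry or densities not in $L^\infty$), but in the present BV, one-dimensional setting your argument is both shorter and sharper.
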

\begin{proof}
For $\mathcal{E}_2$ we can write
\begin{equation}\label{eq.e2-split}
\mathcal{E}_2(A, p)=\sum_{j=1}^{t}\mu(A\cap f^{-j}(A))+\sum_{j=t+1}^{p}\mu(A\cap f^{-j}(A)),
\end{equation}
where we put $t=(\log n)^2$, and note that $A=\Delta_0$ is a small interval of measure $O(\tau/2n)$,
with a centre point $\zeta_k$. Recall $\zeta_k$ are uniformly spaced points in $\XX$, with 
$1\leq k\leq m$. In \eqref{eq.e2-split}, we have for the second term on the right hand side:
\begin{equation*}
\begin{split}
\sum_{j=t+1}^{p}\mu(A\cap f^{-j}(A)) &\leq \sum_{j=t+1}^{p}\left(
\Theta(j)\|1_{A}\|_{\mathrm{BV}}\|1_{A}\|_{1}+\mu(A)^2\right)\\
&\leq n^{\gamma}\Theta((\log n)^2)+n^{\gamma}(\tau/2n)^2=O(n^{-1-c'}),
\end{split} 
\end{equation*}
for some $c'>0$, where we have used (A1). We note that the implied constant in $O(\cdot)$ 
depends on $\tau$. This term causes no complication, and its estimation does not depend on $\zeta_k$

Now the first term on the right hand side of \eqref{eq.e2-split} depends on $\zeta_k$. Indeed
if $\zeta_k$ is a periodic point then this error term will not be small. To bound the error term involving
$\mathcal{E}_2$ within equation 
\eqref{eq:final.error}, it is sufficient to estimate 
\begin{equation}\label{eq:short_times}
\sum_{\Delta\in \U_m}\mu(\Delta)\sum_{j=1}^{t}\mu(\Delta_0\cap f^{-j}\Delta_0).
\end{equation}
We'll bound the sum directly using condition (A2).
By assumption $|\Delta_0|\sim \tau/n$. Hence, letting $\zeta_k$ denote the centre of $\Delta$, for $x\in \Delta_0\cap f^{-j}\Delta_0$ we have for each $j$,
$$d(x,f^jx)\leq d(x,\zeta_k)+d(f^jx,\zeta_k)<\frac{2\tau}{n}.$$
Therefore for $a<1$ we have
$x\in \Delta_0\cap E_{j}(n^{-a})$,
for all $n$ sufficiently large so that $2\tau<n^{1-a}.$
For $b>0$, let
$$\mathcal{F}_n^+:=\{\Delta\in\mathcal{U}_m:\mu(\Delta_0\cap \tilde E_g(n^{-a}))\geq n^{-1-b}\},$$
with $g(n)=a^2(\log n)^2$.
Since the density $\rho(x)$ lies in $\mathrm{BV}$ we have $\mu(\Delta)\leq C|\Delta|=C/m$ for some
$C>0$ and independent of $\Delta$. Thus \eqref{eq:short_times} can be bounded by
\begin{equation}
\sum_{\Delta\not\in\mathcal{F}_n^{+}} \frac{C}m\sum_{j=1}^t\mu(\Delta_0\cap f^{-j}\Delta_0)
+\sum_{\Delta\in\mathcal{F}_n^{+}}\frac Cm \sum_{j=1}^t\mu(\Delta_0\cap f^{-j}\Delta_0).
\label{eq:F+sums}
\end{equation}
We can bound the first term by $\frac{C}{m}(\#(\mathcal{U}_m\setminus\mathcal{F}^{+}_n))n^{-1-b}t\le Ca^2(\log n)^2n^{-1-b}$.
For the second sum in \eqref{eq:F+sums}, observe that for $\beta_1$ from (A2) we have
\begin{equation}
\begin{split}\label{eq:cover-split}
Cn^{-a\beta_1} &\geq \mu(\tilde{E}_{g}(n^{-a}))=\mu\left(\bigcup_{\Delta}\left\{x\in\Delta:\,\exists\,j\leq g(n^a) \text{ where } d(x,f^jx)\leq\frac{1}{n^a}\right\}\right)\\
&\geq C'\frac{n}{2m\tau}
\sum_{\Delta\in\mathcal{F}^{+}_n}\mu(\Delta_0\cap \tilde E_{g}(n^{-a}))
\geq C'\frac{n}{2m\tau}n^{-1-b}\#\mathcal{F}^{+}_n,
\end{split}
\end{equation}
for some $C'>0$. Obtaining the second line in equation \eqref{eq:cover-split} can be explained as follows, see also \cite[Section 4]{Collet}. We cover
$\XX$ using all $\Delta\equiv\Delta(\zeta_k)$ in $\mathcal{F}^{+}_n$, with uniformly spaced
centres $\zeta_k$, $k\in[1,m]$.
For each $\Delta\in\mathcal{F}^{+}_n$ there is an expanded
set $\Delta_0$ centred on $\Delta$, with $|\Delta_0|=\tau/n$.
An \emph{under-cover} of $\XX$ (and hence $\tilde E_g(n^{-a})$)
can be obtained by taking \emph{disjoint} sets of the form $\Delta_0(\zeta_{k_j})$,
where the centres $\{\zeta_{k_j}\}$ are uniformly spaced, and $k_{j+1}-k_j=2\lfloor \tau m/ n \rfloor$.
(The additional multiplying factor of 2 is to ensure an under-covering using disjoint sets).
For such an (under)-covering we obtain
$$\mu(\tilde{E}_{g}(n^{-a}))\geq \sum_{k_j\leq m}\mu(\Delta_0(\zeta_{k_j})\cap \tilde{E}_{g}(n^{-a})).$$ 
If we range over all such $\Delta\in\mathcal{F}^{+}_n$ (i.e. eventually all such $\zeta_{k}$
for $k\in[1,m]$) then we obtain at most 
$2\lfloor \tau m/ n \rfloor$ such under-covers, and this leads to the estimate
$$2\left\lfloor\frac{\tau m}{n} \right\rfloor\mu(\tilde{E}_{g}(n^{-a}))
\geq \sum_{\Delta\in\mathcal{F}^{+}_n}\mu(\Delta_0(\zeta_{k})\cap 
\tilde{E}_{g}(n^{-a})),$$
and hence to the estimate stated in equation \eqref{eq:cover-split}.
We therefore obtain
$$\#\mathcal{F}^{+}_n\leq C\tau m n^{b-a\beta_1},$$
for some $C>0$.
Thus for the second term in \eqref{eq:F+sums} we obtain
\begin{equation*}
\begin{split}
\sum_{\Delta\in\mathcal{F}^{+}_n}\frac{1}{m}\sum_{j=1}^{g(n)}\mu(\Delta_0\cap f^{-j}\Delta_0)
&\lesssim \sum_{\Delta\in\mathcal{F}^{+}_n}\frac{1}{m}\sum_{j=1}^{g(n)}|\Delta_0| 
\leq  \#\mathcal{F}^{+}_n
 \frac{g(n)}m |\Delta_0| \\
&\lesssim\#\mathcal{F}^{+}_n g(n)\frac{|\Delta_0|}m
\leq   C\tau n^{b-a\beta_1} a^2(\log n)^2 |\Delta_0|\\
&\leq  2C \tau^2n^{b-1-a\beta_1} a^2(\log n)^2.
\end{split}
\end{equation*}
To complete the proof of Lemma \ref{lem.e2}, first choose $a<1$, and notice that $\Theta(g(n))$ still 
tends to zero at a superpolynomial speed when adjusting $g(n)$ from $(\log n)^2$
to $(a\log n)^2$. There is flexibility on choosing $a<1$, and we can take this parameter 
arbitrarily close to 1. We then choose $b=\frac{1}{2}a\beta_1$. 
\end{proof}

\subsection{Completing the proof of Theorem \ref{thm:poisson_like}.}
Using Lemmas~\ref{lem.e3}-\ref{lem.e2}, we obtain
\begin{equation}\label{eq:R-sum.est}
\left|\mu\{R_n(n,r_n)=k\}-\sum_{\Delta\in\mathcal{U}_m} \mu(\Delta)\frac{(n\mu(\Delta_0))^k}{k!}
e^{-n\mu(\Delta_0)}\right|
\leq Cn^{-\alpha'},
\end{equation}
where we take $r=\tau/2n$ and $m=n^{1+\beta'}$. In \eqref{eq:R-sum.est} we have 
$C>0$ and $\alpha'>0$. We now simplify the sum over $\mathcal{U}_m$. Lebesgue differentiation gives
\begin{align*}
\mu(\Delta) &=|\Delta|\rho(\zeta_k)+o_{\Delta}(1),\\
\mu(\Delta_0) &=|\Delta_0|\rho(\zeta_k)+o_{\Delta_0}(1),
\end{align*}
where for a generic (full measure) centre point $\zeta_k$ the quantity $o_{A}(1)$ satisfies 
$o_{A}(1)/|A|\to 0$ as $|A|\to 0$ (taking $A$ either as $\Delta$ or $\Delta_0$). For density $\rho\in\mathrm{BV}$,
and for non-generic centres (such as $\rho$ having a jump discontinuity at $\zeta_k$), the corresponding limits are uniformly bounded from above. 
Hence to treat the limit of the sum in equation \eqref{eq:R-sum.est}, define the sequence of 
functions $g_m(x)$ given by 
$g_m(x)=\frac{(\tau\rho(\zeta_k))^k}{k!}e^{-\tau\rho(\zeta_k)}$,
where $x\in\Delta(\zeta_k)$ and $\Delta(\zeta_k)\in\mathcal{U}_m$. Then for $\mu$ almost every $x\in\XX$, 
the functions $g_m(x)\to g(x)$ pointwise
as $m\to\infty$, with $g(x)=\frac{(\tau\rho(x))^k}{k!}e^{-\tau\rho(x)}$, and $g\in L^1(\mathrm{Leb})$.
The Dominated Convergence Theorem gives
\begin{equation*}
\lim_{n\to\infty}\sum_{\Delta\in\mathcal{U}_m} \mu(\Delta)\frac{(n\mu(\Delta_0))^k}{k!}e^{-n\mu(\Delta_0)}
=\int_{\mathcal{X}}\frac{(\tau\rho(x))^k}{k!}e^{-\tau\rho(x)}\rho(x)\,dx.
\end{equation*}
This completes the proof of Theorem \ref{thm:poisson_like}.

\section{proof of Theorem \ref{thm:almost-sure}}\label{sec.pf.thm:almost-sure}
The proof of Theorem \ref{thm:almost-sure} is as follows. We first treat case (b).
In the case that $r_n\to 0$ is a monotone sequence, an elementary observation is
that 
$$\mu\left\{x\in\XX:\, \min_{j\leq n}d(f^jx,x)<r_n\,\mathrm{i.o.}\right\}=
\mu\left\{x\in\XX:\, d(f^nx,x)<r_n\,\mathrm{i.o.}\right\}.$$
For the equation above, clearly the set measured on the right is contained in the set measured on the left. For the other direction see \cite{Galambos,HKKP}.
From \cite[Theorem C]{KKP2}, it can be shown that
$$\sum_{n}\int_{\XX}\mu(B(x,r_n))\,\mathrm{d}\mu<\infty\;\implies\;
\mu\{x\in\XX:\, d(f^nx,x)<r_n\,\mathrm{i.o.}\}=0.$$
Hence, by considering the complement of 
$\{x\in\XX:\, d(f^jx,x)<r_n,\,\mathrm{i.o.}\}$ the result for case (b) immediately follows.

We now turn to case (a). We follow step by step the proof
of Theorem \ref{thm:poisson_like} as applied to the particular case $\{R_n(r,x)=0\}$. Recalling
that $r=\frac\tau{2n}$ here we obtain
$$\lim_{n\to\infty}\mu\left(x\in\XX:\,\min_{k\leq n}\dist(f^kx,x)>\frac{\tau}{2n}\right)
=\int_{\XX}\rho(x)e^{-\tau\rho(x)}\,dx.$$
However, the result above is not enough on its own, as we need to estimate the error rate for the convergence to the limit. To estimate the error rate we consider the cases (a)(i) and (a)(ii)
stated in the Theorem~\ref{thm:almost-sure} separately. However,
the assumptions relating to either of these cases will only appear in the final steps of the proof.
For the sequence $r=\frac\tau{2n}$,
we take a slower decaying sequence for $r\equiv r_n$ of
the form $r=\tau(n)/(2n)$ with $\tau(n)\to\infty$. In case (a)(i) this is equivalent to taking 
$\tau=2c\log\log n$, while in case (a)(ii) we take $\tau=(\log n)^{\gamma}$. The relevant
error rates in the convergence can be obtained from  \eqref{eq:final.error}. We estimate
each of the error terms in turn with appropriate choice of parameter sequence. In particular
we take $t=(\log n)^2$ for the correlation time-lag, $m=n^{1+\beta'}$ for the cardinality of
$\mathcal{U}_m$. To obtain conclusions similar to those given in Lemmas ~\ref{lem.e3}-\ref{lem.e2} we again take $p=n^{\gamma}$ and $M=\log n$. Using again (A1) and (A2), we see that all the terms within the right-hand side of \eqref{eq:final.error} all decay at a power law rate in $n$, thus leading to the same conclusion as given in \eqref{eq:R-sum.est}. This leads to
\begin{equation}
\left|\mu(R_n(r,x)=0)-\sum_{\Delta\in\mathcal{U}_m} \mu(\Delta)e^{-n\mu(\Delta_0)}\right|
\leq Cn^{-\alpha'}
\label{eq:asmain}
\end{equation}

The idea now is to use a First Borel-Cantelli argument to show that for a certain subsequence
$r\equiv r_{n_k}$, we have $\mu(R_{n_k}(r_{n_k},x)=0\,\mathrm{i.o.})=0.$ This requires an estimate in terms of $n$ for the sum over $\mathcal{U}_m$. We
therefore require assumptions on the regularity of the density $\rho(x)$ for $x\in\Delta_0$.
By Lebesgue differentiation, we recall
\begin{align*}
\mu(\Delta) &=|\Delta|\rho(\zeta_k)+o_{\Delta}(1),\\
\mu(\Delta_0) &=|\Delta_0|\rho(\zeta_k)+o_{\Delta_0}(1).
\end{align*}
Under the assumptions of the theorem, recall that $\rho$ is uniformly H\"older (and of
bounded variation). In the following sequence of estimates, we keep the steps as general as possible,
using the H\"older assumptions in the final steps.
If $\rho(x)$ is of bounded variation, then
$$o_{\Delta}(1)\lesssim |\Delta|\mathrm{var}_{\Delta}(\rho),
\quad o_{\Delta_0}(1)\lesssim |\Delta_0|\mathrm{var}_{\Delta_0}(\rho).$$
Potential problems are that the corresponding variations are not asymptotically small
with $|\Delta|$ and (resp.) $|\Delta_0|$. For $r>0$ let
$$G_r(x):=\int_{x-r}^{x+r}\rho(z)\,dz.$$
Clearly, when $r=\tau/2n$, $G(\zeta_k)=\mu(\Delta_0)$. The following lemma
will be useful in the case where we take $\tau\to\infty$ as $n\to\infty$, such as in the
case $\tau_n=c\log\log n$.
\begin{lemma}\label{lem:bv-est}
Suppose that for $\beta'>0$ we have $n/m\leq n^{-\beta'}$. Then there exists 
$\alpha_2>0$ such that for all $r>0$ we have
\begin{equation*}
\left|\sum_{\Delta\in\mathcal{U}_m} \mu(\Delta)e^{-n\mu(\Delta_0)}-
\int_{\XX}\rho(x)e^{-nG_r(x)}\,dx\right|\leq Cn^{-\alpha_2}.
\end{equation*}
\end{lemma}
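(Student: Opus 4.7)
The plan is to recognise the left–hand sum as a piecewise-constant approximation to an integral against $\rho\,dx$, and then control the mismatch with $\int_{\XX}\rho(x)e^{-nG_r(x)}\,dx$ by a direct pointwise comparison of the two exponents. Since each element $\Delta=\Delta(\zeta_k)\in\mathcal{U}_m$ is an interval of length $2\delta$ centred at $\zeta_k$, and since $\mu(\Delta_0)$ depends only on this centre, we may rewrite
\begin{equation*}
\sum_{\Delta\in\mathcal{U}_m}\mu(\Delta)\,e^{-n\mu(\Delta_0)}
\;=\;\int_{\XX}\rho(x)\,e^{-n\mu(\Delta_0(\zeta(x)))}\,dx,
\end{equation*}
where $\zeta(x)$ is the centre of the unique element of $\mathcal{U}_m$ containing $x$.

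Next I compare the exponent $\mu(\Delta_0(\zeta(x)))$ with $G_r(x)=\mu(B(x,r))$. From the definition $\Delta_0(\zeta)=B(\zeta,r+\delta)$ and the fact that $|x-\zeta(x)|\leq\delta$, one obtains the pointwise inclusions
\begin{equation*}
B(x,r)\;\subseteq\;\Delta_0(\zeta(x))\;\subseteq\;B(x,r+2\delta).
\end{equation*}
Because $\rho\in\mathrm{BV}$ is bounded, this yields the uniform bound
\begin{equation*}
0\;\leq\;\mu(\Delta_0(\zeta(x)))-G_r(x)\;\leq\;\mu(B(x,r+2\delta))-\mu(B(x,r))\;\leq\;4\delta\,\|\rho\|_{\infty},
\end{equation*}
valid for every $x\in\XX$ and every $r>0$.

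The final step combines these pieces with the elementary inequality $|e^{-a}-e^{-b}|\leq|a-b|$ for $a,b\geq 0$, yielding
\begin{equation*}
\bigl|e^{-n\mu(\Delta_0(\zeta(x)))}-e^{-nG_r(x)}\bigr|\;\leq\;4n\delta\,\|\rho\|_{\infty}.
\end{equation*}
Integrating against $\rho\,dx$ and using $\|\rho\|_{1}=1$ gives
\begin{equation*}
\left|\sum_{\Delta\in\mathcal{U}_m}\mu(\Delta)\,e^{-n\mu(\Delta_0)}-\int_{\XX}\rho(x)e^{-nG_r(x)}\,dx\right|\;\leq\;4n\delta\,\|\rho\|_{\infty}.
\end{equation*}
Under the standing choice $m\asymp n^{1+\beta'}$, which forces $n\delta=\tfrac{n}{2m}\leq\tfrac12\,n^{-\beta'}$, the right–hand side is $O(n^{-\beta'})$, and the conclusion holds with any $\alpha_{2}\in(0,\beta']$.

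There is no serious obstacle here: the only ingredient beyond the definitions is $\|\rho\|_{\infty}<\infty$, which is automatic from $\rho\in\mathrm{BV}$, and the crucial uniformity in $r$ is built into the argument, since the balls $B(x,r)$ and $B(x,r+2\delta)$ differ by an annulus of total length $4\delta$ regardless of the size of $r$. The mild delicate point to keep track of is that the inclusions and the $4\delta\|\rho\|_{\infty}$ bound must be uniform over $x\in\XX$ and independent of the particular partition element into which $x$ falls, which is clear once the comparison is formulated centre–free as above.
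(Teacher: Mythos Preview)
Your argument is correct, and it is genuinely cleaner than the paper's. The key simplification is your first displayed identity: writing the sum $\sum_{\Delta}\mu(\Delta)e^{-n\mu(\Delta_0)}$ \emph{exactly} as $\int_{\XX}\rho(x)\,e^{-n\mu(\Delta_0(\zeta(x)))}\,dx$ avoids any intermediate Riemann-sum step. From there the whole lemma reduces to the single pointwise inclusion $B(x,r)\subseteq\Delta_0(\zeta(x))\subseteq B(x,r+2\delta)$, the $1$-Lipschitz bound for $e^{-t}$ on $[0,\infty)$, and $\|\rho\|_\infty<\infty$.

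The paper instead proceeds in two stages: first it replaces $\mu(\Delta)$ by $|\Delta|\rho(\zeta_k)$ using the local variation $\mathrm{var}_\Delta(\rho)$, and then compares the resulting Riemann sum $\sum_k|\Delta_k|\rho(\zeta_k)e^{-nG_r(\zeta_k)}$ to the integral $\int\rho\,e^{-nG_r}\,dx$ via the Mean Value Theorem applied to $z\mapsto e^{-nz}$, together with a separate estimate on $|G_r(x^+)-G_r(x^-)|$ over each $\Delta$. Both routes land on an $O(n/m)=O(n^{-\beta'})$ error. Your approach buys brevity and uses only $\|\rho\|_\infty$ rather than the full BV structure; the paper's approach makes the dependence on the oscillation of $\rho$ over partition elements more explicit, which is what later gets refined under the H\"older hypothesis in case~(a)(ii). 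Either way, the uniformity in $r$ that the subsequent Borel--Cantelli argument needs is present.
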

\begin{proof}
To prove the lemma, first of all we have
\begin{multline}\label{eq:BV-asymp1}
\left|\sum_{\Delta\in\mathcal{U}_m} \mu(\Delta)e^{-n\mu(\Delta_0)}-\sum_{k=1}^{m}
|\Delta(\zeta_k)|\rho(\zeta_k)e^{-nG_r(\zeta_k)}\right|\\
\lesssim 
\sum_{k=1}^{m}|\Delta_k|\mathrm{var}_{\Delta}(\rho)
e^{-nG_r(\zeta_k)}. 
\end{multline}
This follows using (for general $\Delta$), and any $x\in\Delta$,
$$\bigl|\mu(\Delta)-\rho(x)|\Delta|\bigr|\leq|\rho(x^+)-\rho(x^{-})||\Delta|
\leq|\Delta|\mathrm{var}_{\Delta}(\rho).$$  
Here, $\rho(x^+),\rho(x^{-})$ correspond to max/min values of $\rho$ on $\Delta$. 
In \eqref{eq:BV-asymp1} above we recall that $\Delta_k\equiv\Delta(\zeta_k)$.
Each $\Delta_k$ has uniform size $|\Delta|=1/m$, and from the fact that $\rho\in\mathrm{BV}$, then there exists $C>0$ such that $\mathrm{var}(\rho)\leq C$. Using the fact that
$e^{-nG_r(\zeta_k)}\leq 1$, it follows that
\begin{equation*}
\sum_{k=1}^{m}|\Delta_k|\mathrm{var}_{\Delta}(\rho)
e^{-nG_r(\zeta_k)}\leq |\Delta|\sum_{\Delta\in\mathcal{U}_m}\mathrm{var}_{\Delta}(\rho)<\frac{C}{m}.
\end{equation*}
For $x\in\Delta$, we compare $\rho(\zeta_k)e^{-nG_r(\zeta_k)}$ to $\rho(x)e^{-nG_r(x)}$. This
comparison requires only assumptions on the variation of $\rho$ over $\Delta$. 
For any $\Delta\in\mathcal{U}_m$, and all $x\in\Delta$ we observe that
\begin{multline}\label{eq:int_est_split1}
\left|\int_{\Delta}\rho(z)e^{-nG_r(z)}\,dz-\rho(x)e^{-nG_r(x)}|\Delta|\right|\\
\leq |\Delta|\sup_{\Delta}(\rho)\left(e^{-nG_r(x^{+})}-
e^{-nG_r(x^{-})}\right)+
|\Delta|\sup_{\Delta}\left(e^{-nG_r(x)}\right)\mathrm{var}_{\Delta}(\rho),
\end{multline}
where $x^{\pm}$ correspond to values where $G_r(x)$ takes its max/min values in $\Delta$.
We estimate the first term on the right-hand side of \eqref{eq:int_est_split1}.
 There exist $C_1,C_2>0$ such that
$$\sup_{\Delta}(\rho)<C_1,\quad\sup_{\Delta}\left(e^{-nG_r(z)}\right)<C_2.$$
In fact we can take $C_2=1$. Since $z\mapsto e^{-nz}$ is differentiable, the  Mean Value Theorem implies
$$\left|e^{-nG_r(x^{+})}-
e^{-nG_r(x^{-})}\right|
\leq\sup_{\Delta}\left(ne^{-nG_r(x)}\right)|G_r(x^+)-G_r(x^-)|.$$
This leads to elementary bounds (assuming without loss of generality that $x^{+}>x^{-}$):
\begin{equation*}
\begin{split}
|G_r(x^+)-G_r(x^-)| &\leq \left|\int_{x^{+}-r}^{x^{+}+r}\rho(z)\,dz-
\int_{x^{-}-r}^{x^{-}+r}\rho(z)\,dz\right|\\
&\leq \int_{x^{-}+r}^{x^{+}+r}\rho(z)\,dz+\int_{x^{-}-r}^{x^{+}-r}\rho(z)\,dz\\
&\leq 2\sup_{\Delta_0}(\rho)\cdot |x^{+}-x^{-}|\leq C|\Delta|,
\end{split}
\end{equation*}
since $\rho$ is uniformly bounded on $\Delta_0$ (independently of $r$). 
We obtain for all $x\in\Delta$
\begin{equation}\label{eq:int_est_split2}
\left|\int_{\Delta}\rho(z)e^{-nG_r(z)}\,dz-\rho(x)e^{-nG_r(x)}|\Delta|\right|
\leq C'n|\Delta|^2+C''|\Delta|\mathrm{var}_{\Delta}(\rho).
\end{equation}
Combining  \eqref{eq:BV-asymp1} and \eqref{eq:int_est_split2}, and summing
over $\Delta\in\mathcal{U}_m$ we obtain (for updated uniform constants $C', C''$)
\begin{multline*}
\left|\sum_{\Delta\in\mathcal{U}_m} \mu(\Delta)e^{-n\mu(\Delta_0)}-
\int_{\XX}\rho(x)e^{-nG_r(x)}\,dx\right|\\
\leq \sum_{\Delta\in\mathcal{U}_m}\left(C'n|\Delta|^2+C''|\Delta|\mathrm{var}_{\Delta}(\rho)\right).
\end{multline*}
Since $\rho\in\mathrm{BV}$ we clearly have $\sum_{\Delta}\mathrm{var}_{\Delta}(\rho)<\mathrm{var}(\rho)<C$.
Hence for $n/m\leq n^{-\beta'}$, we obtain the proof of the Lemma.
\end{proof}
\begin{remark}
In the proof of Lemma \ref{lem:bv-est}, the dependence on $r$ does not explicitly feature. The error
terms just depend on the spacing sizes $|\Delta|$ associated to the distribution of the 
centre points $\{\zeta_k\}$. In the case where $\inf_{x\in\Delta}\rho(x)=0$
the estimation of $G_r(x)$ is more delicate, and requires assumptions
on how the density function $\rho$ varies on $\Delta_0$, so that we can control 
$\mathrm{var}_{\Delta_0}G_r$. Note that for the proof of Lemma \ref{lem:bv-est}, we did not need to study
the variation of $G_r$ over $\Delta_0$. It was sufficient to control the difference between
the maximum and minimum of $G$ over small intervals $\Delta$.
\end{remark}

We are now in a position to consider cases (a)(i) and (a)(ii). First consider case (i), and assume that 
the invariant density is bounded away from zero. 
Using the fact that
$G_r(x)\geq 2r\inf_{x\in\XX}\rho(x),$ we obtain
\begin{equation*}
\int_{\XX}\rho(x)e^{-nG_r(x)}\,dx\leq C_1\exp\{-2nr\inf_{x\in\XX}\rho(x)\}.
\end{equation*}
Using Lemma~\ref{lem:bv-est}, \eqref{eq:asmain}, and going along the sequence $r_n=c\log\log n/n$ we obtain
\begin{equation}\label{eq.bc-error1}
\mu(R_n(r_n,x)=0)\leq Cn^{-\alpha_2}+C_1\exp\{-2c(\log\log n)\inf_{x\in\XX}\rho(x)\},
\end{equation}
for some $\alpha_2>0$. 
As it stands, the right-hand side of \eqref{eq.bc-error1} is not summable. Hence
we now go along the subsequence $n_k=\lfloor a^k \rfloor$ for $a>1$. This has the property that 
$n_{k+1}/n_k\to a$ as $n\to\infty$.
Putting $\rho_\mu=\inf_{x\in\XX}\rho(x)$, we obtain
\begin{equation*}\label{eq:as2}
\begin{split}
\mu(R_{n_k}(r_{n_k},x)=0) &\leq C_1(\log n_k)^{-2c\rho_\mu}
+O(n_{k}^{-\alpha_2}),\\
&\leq C_1(k\log a)^{-2c\rho_\mu}+O(a^{-k\alpha_2})
\end{split}
\end{equation*}
Since $a>1$, the right-hand side is summable provided $2c\rho_\mu>1$.
This matches the assumption within Theorem \ref{thm:almost-sure} for the value of $c$.
Hence, there exists $k_0$ such that for all $k\geq k_0$ we have
$$\dist(f^{n_k}x,x)<c\log\log n_k/n_k.$$
Consider $n\in[n_{k},n_{k+1}]$ for $k>k_0$. Since $n_{k+1}/n_k\to a$ we have 
$n_{k}>(1+o(1))n/a$. Therefore, for all sufficiently large $n$, 
$$\dist(f^{n}x,x)<\frac{c\log\log (n/a)}{(n/a)}(1+o(1)).$$
Since $a$ can be arbitrarily chosen close to 1 the result follows for case (a)(i).

Now consider case (a)(ii), where by assumption $\rho(x)$ is uniformly H\"older. 
Hence, there exist $\eta\in(0,1]$, $L>0$ and $\delta>0$ such that $\forall\,x,y$ with $|x-y|<\delta$
we have
$$|\rho(x)-\rho(y)|\leq L|x-y|^{\eta}.$$
Thus, we have refined asymptotic information on $\mu(\Delta)$ and $\mu(\Delta_0)$ as follows:
\begin{align*}
\left|\mu(\Delta)-|\Delta|\rho(\zeta_k)\right| &\leq L|\Delta|^{1+\eta},\\
\left|\mu(\Delta_0)-|\Delta_0|\rho(\zeta_k)\right| &\leq L|\Delta_0|^{1+\eta}.
\end{align*}
Thus we now take $m=n^{1+\beta'}$, with $\beta'\in(0,1)$ and
$\tau=(\log n)^{\gamma}$ for some $\gamma>0$. This choice of $\tau$ corresponds to $r=(\log n)^{\gamma}/2n$.
Hence, for all $x\in\XX$ we have
$$\left|G_{r}(x)-\frac{(\log n)^{\gamma}}{n}\rho(x)\right|\leq L'\frac{(\log n)^{\gamma(1+\eta)}}{
n^{1+\eta}},$$
and moreover
$$\left|e^{-nG_r(x)}-e^{-(\log n)^{\gamma}\rho(x)}\right|\leq\frac{1}{n^{\gamma'}},$$
for some $\gamma'>0$.
Therefore, using Lemma~\ref{lem:bv-est} and \eqref{eq:asmain}, we obtain
\begin{equation}\label{eq.bc-error1b}
\mu(R_n(r_n,x)=0)\leq Cn^{-\alpha_2}+\int_{\mathcal{X}}\rho(x)e^{-(\log n)^{\gamma}\rho(x)}\,dx.
\end{equation}
In general, the right-hand side of \eqref{eq.bc-error1} is not summable (for example when $\gamma<1$).
To extend to these cases,
we can again go along the subsequence $n_k=\lfloor a^k \rfloor$ for $a>1$.
This leads to
\begin{equation*}\label{eq:as2b}
\mu(R_{n_k}(r_{n_k},x)=0) 
\leq\int_{\mathcal{X}}\rho(x)e^{-(k\log a)^{\gamma}\rho(x)}  \,dx +O(a^{-k\alpha_2}).
\end{equation*}
Since $a>1$, the right-hand side is summable if
$$\sum_{k\geq 1}\int_{\mathcal{X}}\rho(x)e^{-(k\log a)^{\gamma}\rho(x)}\,dx <\infty.$$
This matches the assumption of case (a)(ii) by taking $\epsilon=(\log a)^{\gamma}$. 
The proof is then completed using the same argument as in case (a)(i).

\section{Proof of Theorem \ref{thm.asym-hyp-rec} }\label{pf:thm.asym-hyp-rec}
For the distributional limit law, it is sufficient to get the appropriate estimates for $\mu\left(x\in \XX: R_n(r_n, x)\le k\right)$ consistent with Theorem~\ref{thm:poisson_like}. For almost sure results,
we obtain almost-sure upper/lower bounds for $\min_{j\leq n}X_j(x)$ consistent with the cases described
in Theorem~\ref{thm:almost-sure}.

For $\eps>0$, consider $(Y_\eps, F_\eps= f^{\tau_\eps}, \mu_\eps)$  as in the definition of asymptotically hyperbolic.
Let $R_n^{F_\eps}$ be the counting function for the random variable $Y_i = \text{dist}(F_\eps^ix, x)$.  Then by assumption Theorem~\ref{thm:poisson_like} applies to this counting function on $(Y_\eps, F_\eps, \mu_\eps)$.

Given $\delta>0$ and $N\in \mathbb{N}$, define 
$$Y_\eps^{\delta, N}:= \left\{x\in Y_\eps: \left|\frac{\tau_\eps^k(x)}k-\frac1{\mu(Y_\eps)}\right|<\delta \text{ for all } k \ge N\right\}.$$
Here, $\tau_\eps^k(x)=\sum_{j=0}^{k-1}\tau_{\eps}(F_\eps^jx)$.
By the ergodic theorem, $\mu_\eps(Y_\eps^{\delta, N}) \to 1$ as $N\to \infty$.

Define $g^\pm(\eps, \delta):=\left(\frac1{\mu_\eps(Y)}\pm\delta\right)$.  So given $x\in Y_\eps^{\delta, N}$, we can write 
$$ng^-(\eps, \delta)<\tau_\eps^n(x)<ng^+(\eps, \delta).$$
So if the $f$-iteration time of $x\in Y_\eps^{\delta, N}$ is $n$, then the $F_\eps$ iteration time is in $\left( \frac n{g^+(\eps, \delta)}, \frac n{g^-(\eps, \delta)}\right)$.  We also note that for every $k$ hits of $x\in Y_\eps$ to an $r_n$-neighbourhood of $x$ by $f$ up to time $\tau_\eps^n(x)$, there can only be fewer hits by $F_\eps$ up to time $n/g^-(\eps, \delta)$.  

So we deduce that 
\begin{align*}\mu\left(x\in Y_\eps^{\delta, N}: R_n(r_n, x)\le k\right) & \le \mu\left(x\in Y_\eps^{\delta, N}: R_{ n/{g^-(\eps, \delta)}}^{F_\eps}(r_n, x)\le k\right)\\
 & \le \mu\left(x\in Y_\eps: R_{n/g^-(\eps, \delta)}^{F_\eps}(r_n, x)\le k\right).
 \end{align*}

Moreover, for every $k$ hits by $F_\eps$ of $x\in Y_\eps$ to an $r_n$-neighbourhood of $x$ up to time $n/g^+(\eps, \delta)$ there can be at most $k$ corresponding hits by $f$ of $x$ to its $r_n$-neighbourhood, if we exclude hits at the edge of $Y_\eps$.  Taking $Y_{\eps, r_n}$ to be $Y_\eps$ with a $r_n$-neighbourhood removed,
\begin{align*}
\mu\left(x\in Y_\eps^{\delta, N}: R_{n/g^+(\eps, \delta)}^{F_\eps}(r_n, x)\le k\right) &\\
&\hspace{-4cm} \le \mu(Y_\eps\setminus Y_{\eps, r_n})+ \mu\left(x\in Y_{\eps, r_n}\cap Y_\eps^{\delta, N}: R_{n}(r_n, x)\le k\right),
\end{align*}
so 
\begin{align*}&\mu\left(x\in Y_\eps: R_{n/g^+(\eps, \delta)}^{F_\eps}(r_n, x)\le k\right) - \mu(Y_\eps\sm Y_\eps^{N, \delta})\\
&\hspace{3cm} \le  \mu(Y_\eps\setminus Y_{\eps, r_n})+ \mu\left(x\in  Y_\eps^{\delta, N}: R_{n}(r_n, x)\le k\right).
\end{align*}

For the upper bound above we can write
\begin{align*}\mu\left(x\in Y_\eps^{\delta, N}: R_n(r_n, x)\le k\right) 
 & \le \mu(Y_\eps)\mu_\eps\left(x\in Y_\eps: R_{n/g^-(\eps, \delta)}^{F_\eps}(r_n, x)\le k\right).
 \end{align*}
Since by Theorem~\ref{thm:poisson_like}, 
\begin{align*}
\mu_\eps\left(x\in Y_\eps: R_{n/g^-(\eps, \delta)}^{F_\eps}(r_n, x)= j\right)& \\
&\hspace{-2cm}\to  \int_{Y_\eps}\frac{\left({\tau}{g^-(\eps, \delta)}\right)^j\rho_\eps(x)^{j+1}}{j!}
e^{-\rho_\eps(x){\tau}{g^-(\eps, \delta)}}\,dx.\end{align*}
for $\rho_\eps = \rho/\mu(Y_\eps)$, the asymptotic estimate becomes
\begin{align*}\mu\left(x\in \XX: R_n(r_n, x)\le k\right) & \\
&\hspace{-3.8cm}
\lesssim \mu((Y_\eps^{\delta, N})^c)+ \mu(Y_\eps)\sum_{j=0}^k  \int_{Y_\eps}\frac{\left({\tau}{g^-(\eps, \delta)}\right)^j\rho_\eps(x)^{j+1}}{j!}
e^{-\rho_\eps(x){\tau}{g^-(\eps, \delta)}}\,dx.
\end{align*}
We can take the first term to 0 by letting $N$ get large.  Then $g^\pm(\eps, \delta)\to 1$ $Y_\eps\to \XX$ and $\rho_\eps(x)\to \rho(x)$ as $\eps\to 0$, so 
\begin{align*}\mu\left(x\in \XX: R_n(r_n, x)\le k\right) &
\le \sum_{j=0}^k  \int_{\XX}\frac{\tau^j\rho(x)^{j+1}}{j!}
e^{-\rho(x)\tau}\,dx.
\end{align*}
The lower bound follows similarly, thereby completing the proof of Theorem \ref{thm.asym-hyp-rec}
relevant for the distributional limit law.

For the corresponding almost-sure results the same ideas apply. We consider each $Y_{\eps}$, and for the various cases (a)(i), (a)(ii), and (b) in Theorem~\ref{thm:almost-sure} we just replace $r_n$ by the corresponding $r_{n/g^{\pm}(\eps,\delta)}$, and similarly replace the density $\rho(x)$ by 
$\rho_{\epsilon}(x)$. The corresponding range of $c$ in case (a)(i) depends on having
$\inf_{x\in\XX}\rho(x)>0$ (relevant to the limit $\eps=0$). 
If this is not the case, then we apply criteria (a)(ii)
to the corresponding density. For the lower bound we apply criteria (b) to the density $\rho(x)$.
This completes the proof of Theorem \ref{thm.asym-hyp-rec} for
almost sure bounds on $\min_{j\leq n}X_j(x)$.

\appendix

\section{Technical estimates concerning the invariant density}\label{sec.appendix}
For the proof of Theorem \ref{thm:almost-sure}, case (a)(ii) we can remove the assumption that $\rho$ has to be uniformly H\"older, and allow for a finite number of jump discontinuities in the density,
with left/right limits for $\rho$ as points in $\mathcal{C}$ are approached.
We let  $\mathcal{C}=\{y_1,\ldots,y_N\}$ denote the points of discontinuity, and let $\{L_1,\ldots,L_N\}$ 
denote the corresponding set of jump values, i.e., the differences $|\rho(y_{i}^{+})-\rho(y_{i}^{-})|$. 
We assume that $\rho$ is uniformly H\"older on each $[y_{i},y_{i+1}]$.
We state the following corollary.
\begin{corollary}
Under the assumptions of Theorem \ref{thm:almost-sure},
suppose instead that $\rho$ has a finite number of points of discontinuity as described via
the set $\mathcal{C}$ above. Then statement (a)(ii) remains valid.
\end{corollary}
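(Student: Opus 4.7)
The plan is to follow the proof of Theorem~\ref{thm:almost-sure}(a)(ii) verbatim up to the point where uniform H\"older regularity of $\rho$ is actually invoked. Lemma~\ref{lem:bv-est}, which converts the sum $\sum_{\Delta\in\mathcal{U}_m}\mu(\Delta)e^{-n\mu(\Delta_0)}$ into the integral $\int_{\XX}\rho(x)e^{-nG_r(x)}\,dx$ with polynomial error $O(n^{-\alpha_2})$, requires only that $\rho\in\mathrm{BV}$ and bounded, both of which are automatic from the piecewise H\"older assumption. Thus the estimate \eqref{eq:asmain} and all the decay-of-correlations reductions preceding it remain valid without change. The only step that genuinely fails at the jump points is the replacement of $G_r(x)$ by $2r\rho(x)$, and hence of $e^{-nG_r(x)}$ by $e^{-(\log n)^{\gamma}\rho(x)}$, valid under a pointwise H\"older bound.

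To handle this step I would partition $\XX = \XX_g(r)\sqcup\XX_b(r)$, where $\XX_b(r):=\bigcup_{i=1}^{N}B(y_i,r)$ is the union of $r$-balls around the finitely many discontinuity points and $\XX_g(r)$ is its complement. For $x\in\XX_g(r)$ the interval $[x-r,x+r]$ is contained in a single H\"older piece $[y_i,y_{i+1}]$, so with common exponent $\eta\in(0,1]$ and constant $L$ one obtains $|G_r(x)-2r\rho(x)|\leq 2Lr^{1+\eta}$ and hence, via the mean value theorem applied to $u\mapsto e^{-nu}$,
\begin{equation*}
\sup_{x\in\XX_g(r)}\left|e^{-nG_r(x)}-e^{-(\log n)^{\gamma}\rho(x)}\right|\leq Cnr^{1+\eta}=O\!\left(\frac{(\log n)^{\gamma(1+\eta)}}{n^{\eta}}\right),
\end{equation*}
for $r=(\log n)^{\gamma}/(2n)$. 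Over $\XX_b(r)$, whose Lebesgue measure is at most $2Nr=O((\log n)^{\gamma}/n)$, I trivially bound both $\int_{\XX_b(r)}\rho(x)e^{-nG_r(x)}\,dx$ and $\int_{\XX_b(r)}\rho(x)e^{-(\log n)^{\gamma}\rho(x)}\,dx$ by $\|\rho\|_{\infty}\cdot 2Nr=O((\log n)^{\gamma}/n)$.

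Combining the two regions yields the analogue of \eqref{eq.bc-error1b},
\begin{equation*}
\mu(R_n(r_n,x)=0)\leq Cn^{-\alpha_2}+C'\frac{(\log n)^{\gamma}}{n}+\int_{\XX}\rho(x)e^{-(\log n)^{\gamma}\rho(x)}\,dx,
\end{equation*}
where the first two terms are summable along the geometric subsequence $n_k=\lfloor a^k\rfloor$ with $a>1$, and the integral is summable by the hypothesis \eqref{eq:asure-sum}. A first Borel--Cantelli argument along $(n_k)$, followed by the monotone interpolation between consecutive $n_k$'s already performed in the proof of Theorem~\ref{thm:almost-sure}, then delivers the stated almost sure upper bound. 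The only genuine obstacle is controlling the bad region $\XX_b(r)$ near $\mathcal{C}$; fortunately the finiteness of $\mathcal{C}$ together with the polylogarithmic-over-$n$ size of $r_n$ makes the crude $L^{\infty}$ bound on $\rho$ more than sufficient, so no refined behaviour of $\rho$ at the jump points is needed.
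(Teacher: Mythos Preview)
Your proposal is correct and follows essentially the same route as the paper: both arguments observe that Lemma~\ref{lem:bv-est} needs only $\rho\in\mathrm{BV}$, then split $\XX$ into the good region where $[x-r,x+r]$ avoids $\mathcal{C}$ (so the H\"older estimate $|G_r(x)-2r\rho(x)|\leq Cr^{1+\eta}$ holds) and the bad $r$-neighbourhood of $\mathcal{C}$, whose measure $O(Nr)=O((\log n)^{\gamma}/n)$ is controlled crudely. Your treatment of the bad set via the $L^\infty$ bound on $\rho$ is in fact slightly cleaner than the paper's, which first bounds $|e^{-nG_r(x)}-e^{-2nr_n\rho(x)}|$ by $1$ there and then multiplies by the same measure bound.
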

\begin{proof}
In the proof of Theorem \ref{thm:almost-sure}, the H\"older assumption on $\rho$ is primarily used to obtain an asymptotic on the function $G_r(x)$ in terms of $r$, and in particular on the function 
$e^{-nG_r(x)}$. Suppose now we allow for jumps in the density. Consider $x\in\XX$, and suppose
$\Delta_0(x)=[-r+x,x+r]$ is such that $\mathcal{C}\cap\Delta_0=\emptyset$. Then we obtain the bound
$$|G_r(x)-\rho(x)\cdot 2r|\leq Cr^{1+\eta},$$
using the H\"older assumption on $\rho$ away from $\mathcal{C}$.
Otherwise, suppose that $r$ is small
enough that $\Delta_0$ contains a single point of $\mathcal{C}$, (recall that we take 
$r\equiv r_n\to 0$). Then we obtain
$$|G_r(x)-\rho(x)\cdot 2r|\leq Cr L_i.$$
Considering a worst case scenario where $\rho(x)$ obtains a value of zero on $\Delta_0$, we obtain
$$|e^{-nG_r(x)}-e^{-2nr_n\rho(x)}|\leq |1-e^{-CL_i(\log n)^{\gamma}}|$$
along the sequence $r_n=(\log n)^{\gamma}/2n$. This estimate is trivially bounded by 1.
The measure of the set of $x\in\XX$ such that $\Delta_0(x)\cap\mathcal{C}\neq\emptyset$ is bounded
by $2r\#\mathcal{C}$. Thus, the overall (extra) contribution given to the error term 
as appearing in Lemma \ref{lem:bv-est} is of the order
$$\lesssim \#\mathcal{C} \frac{(\log n)^{\gamma}}{n},$$
and hence of power law type bound in $n$. Thus, the proof of Theorem \ref{thm:almost-sure} goes
through when we allow for a finite number of jumps in the density.
\end{proof}

\begin{remark}
When there is a countable infinite number of jumps in the density then we know that 
$\sum_{i}L_i<\infty$ under the assumption of bounded variation. However
to apply the summation criteria of equation \eqref{eq:asure-sum}
further assumptions are required on the distribution of the jump locations, and their magnitude. 
\end{remark}

\end{document}